\def\section{\@startsection{section}{1}%
  \z@{1.2\linespacing\@plus\linespacing}{.5\linespacing}%
  {\normalfont\scshape\centering}}
\def\subsection{\@startsection{subsection}{2}%
  \z@{0.9\linespacing\@plus.7\linespacing}{-.5em}%
  {\normalfont\bfseries}}
\definecolor{gr}{rgb}   {0.,   0.69,   0.23 } 
\definecolor{mg}{rgb}   {0.85,  0.,    0.85}
\newcommand{\Bk}{\color{black}}
\definecolor{marin}{rgb}   {0.,   0.65,   0.25}
\definecolor{rouge}{rgb}   {0.8,   0.,   0.}
\newtheorem{theorem}{Theorem}[section]
\newtheorem{lemma}[theorem]{Lemma}
\theoremstyle{definition}
\theoremstyle{remark}
\newtheorem{remark}[theorem]{Remark}
\numberwithin{equation}{section}
\newcommand{\ee}{\hskip0.15ex}
\newcommand{\dd}[1]{_{\raise-0.3ex\hbox{$\scriptstyle #1$}}}
\newcommand {\Norm}[2]{ \mathchoice 
    {|\ee #1\ee|\dd{#2}}
    {| #1 |_{#2}}
    {| #1 |_{#2}}
    {| #1 |_{#2}} }
\newcommand {\DNorm}[2]{ \mathchoice 
    {\|\ee #1\ee\|\dd{#2}}
    {\| #1 \|_{#2}}
    {\| #1 \|_{#2}}
    {\| #1 \|_{#2}} }
\newcommand {\Normc}[2]{ \mathchoice 
    {|\ee #1\ee|\dd{#2}^2}
    {| #1 |_{#2}^2}
    {| #1 |_{#2}^2}
    {| #1 |_{#2}^2} }
\newcommand {\DNormc}[2]{ \mathchoice 
    {\|\ee #1\ee\|\dd{#2}^2}
    {\| #1 \|_{#2}^2}
    {\| #1 \|_{#2}^2}
    {\| #1 \|_{#2}^2} }
\renewcommand{\div}{\operatorname{\rm div}}
\newcommand{\grad}{\operatorname{\textbf{grad}}}
\newcommand{\curl}{\operatorname{\textbf{curl}}}
\newcommand\Sbb{{\mathbb S}}
\newcommand\C{{\mathbb C}}
\newcommand\R{{\mathbb R}}
\newcommand\N{{\mathbb N}}
\newcommand\Q{{\mathbb Q}}
\newcommand\Z{{\mathbb Z}}
\newcommand\sC{{\mathscr C}}
\renewcommand\L{{\mathscr L}}
\newcommand\OO{{\mathcal O}}
\renewcommand\SS{{\mathcal S}}
\newcommand\WW{{\boldsymbol W}}
\newcommand{\bc}{{\boldsymbol{c}}}
\newcommand{\be}{{\boldsymbol{e}}}
\newcommand{\bff}{\boldsymbol{f}}
\newcommand{\bu}{\boldsymbol{u}}
\newcommand{\bv}{\boldsymbol{v}}
\newcommand{\bw}{\boldsymbol{w}}
\newcommand{\bx}{{\boldsymbol{x}}}
\newcommand {\gA}{\mathfrak{A}}
\newcommand {\gC}{\mathfrak{C}}
\newcommand {\gE}{\mathfrak{E}}
\newcommand {\gM}{{\mathfrak M}}
\newcommand {\gP}{{\mathfrak P}}
\newcommand {\gR}{{\mathfrak R}}
\newcommand {\gU}{{\mathfrak U}}
\newcommand {\gW}{{\mathfrak W}}
\newcommand{\inff}{\mathop{\operatorname{\vphantom{p}inf}}}
\renewcommand{\Re}{\operatorname{\rm Re}}
\renewcommand{\Im}{\operatorname{\rm Im}}
\begin{document}

\title{\bf The inf-sup constant for the divergence on corner domains  }  

\author{Martin Costabel, Michel Crouzeix, Monique Dauge and Yvon Lafranche}

\address{IRMAR UMR 6625 du CNRS, Universit\'{e} de Rennes 1 }

\address{Campus de Beaulieu,
35042 Rennes Cedex, France \bigskip}

\email{martin.costabel@univ-rennes1.fr}
\urladdr{http://perso.univ-rennes1.fr/martin.costabel/}

\email{michel.crouzeix@univ-rennes1.fr}
\urladdr{http://perso.univ-rennes1.fr/michel.crouzeix/}

\email{monique.dauge@univ-rennes1.fr}
\urladdr{http://perso.univ-rennes1.fr/monique.dauge/}

\email{yvon.lafranche@univ-rennes1.fr}
\urladdr{http://perso.univ-rennes1.fr/yvon.lafranche/}

\date{10 January 2014}  

\keywords{LBB condition, inf-sup constant, essential spectrum of the Cosserat operator}

\subjclass{30A10, 35Q35}

\begin{abstract}
The inf-sup constant for the divergence, or LBB constant, is related to the Cosserat spectrum. It has been known for a long time that on non-smooth domains the Cosserat operator has a non-trivial essential spectrum, which can be used to bound the LBB constant from above. We prove that the essential spectrum on a plane polygon consists of an interval related to the corner angles and that on three-dimensional domains with edges, the essential spectrum contains such an interval. We obtain some numerical evidence for the extent of the essential spectrum on domains with axisymmetric conical points by computing the roots of explicitly given holomorphic functions related to the corner Mellin symbol. Using finite element discretizations of the Stokes problem, we present numerical results pertaining to the question of the existence of eigenvalues below the essential spectrum on rectangles and cuboids.
\end{abstract}

\maketitle

\section{Introduction}

The inf-sup constant of the divergence $\beta(\Omega)$ is defined for a domain $\Omega\subset\R^{d}$ as
\begin{equation}
\label{E:infsup}
    \beta(\Omega) = \ {\inff_{q\ee\in\ee L^2_\circ(\Omega)}} \ \ 
   {\sup_{\bv\ee\in\ee
   H^1_0(\Omega)^d}} \ \ 
   \frac{\big\langle\! \div\bv, q \big\rangle_\Omega}{\Norm{\bv}{1,\Omega}\,\DNorm{q}{0,\Omega}} \ .
\end{equation}
Here 
$L^2_\circ(\Omega)$ denotes the space of square integrable functions with mean value zero,  with norm $\DNorm{\cdot}{0,\Omega}$ and scalar product $\big\langle \cdot, \cdot \big\rangle_\Omega$, and 
$H^1_0(\Omega)$ is the usual Sobolev space, closure of the space of smooth functions of compact support in $\Omega$ with respect to the $H^{1}$ seminorm, which for vector-valued functions is defined by
\[
   \Norm{\bv}{1,\Omega} = \DNorm{\grad \bv}{0,\Omega} =
   \Big(\sum_{k=1}^d \sum_{j=1}^d \DNormc{\partial_{x_j}v_k}{0,\Omega} \Big)^{1/2} .
\]
The inf-sup condition $\beta(\Omega)>0$ is also called LBB condition \cite{Malkus1981} or Ladyzhenskaya-Babu\v{s}ka-Brezzi condition (a term coined ca.\ 1980 by T. J. Oden, on suggestion by J. L. Lions \cite{Oden_perso}), and $\beta(\Omega)$ is therefore also known as LBB constant \cite{Dobrowolski2003,Dobrowolski2005}. It plays an important role in the discussion of the stability of solutions and numerical approximations of the equations of hydrodynamics. The exact value of the constant is known only for a small class of domains, first for balls and ellipsoids (derived from the Cosserat spectrum \cite{Cosserat1898,Cosserat1898b}), then in three dimensions for spherical shells \cite{Cosserat1902,LiuMarkenscoff1998}, and finally in two dimensions for annular domains \cite{ChizhOlsh00} and some domains defined by simple conformal images of a disk \cite[\S5]{Horgan1975}, \cite{HorganPayne1983,Zsuppan2004}. Its precise value remains, however, unknown for such simple domains as a square. 

Finding estimates, from below and from above, for $\beta(\Omega)$ has therefore been an important subject for many years. The question can be rephrased in terms of the Cosserat eigenvalue problem, see relation \eqref{E:LBB-Coss} below, and it is this problem that we will study in the present paper. In particular, any number known to belong to the nontrivial part of the spectrum of the Cosserat operator will imply an upper bound for the LBB constant.

Other inequalities and eigenvalue problems are known to be related to the Cosserat eigenvalue problem and the LBB condition, namely the Korn and Friedrichs inequalities. In two dimensions in particular, such relations have been investigated since the paper \cite{Friedrichs1937} by Friedrichs, and the equivalence between these inequalities and equations between the corresponding constants have been studied in the classical paper \cite{HorganPayne1983} by Horgan and Payne, see \cite{CoDa_IBAFHP} for improved versions of some of their results.

For the Cosserat eigenvalue problem, the values $0$ and $1$ are eigenvalues of infinite multiplicity, and $1/2$ is an accumulation point of eigenvalues. On smooth domains, these are the only points in the essential spectrum, as shown by Mikhlin \cite{Mikhlin1973}, see \cite{Crouzeix1997} for a complete proof for $\sC^{3}$ domains. On non-smooth domains, a non-trivial essential spectrum is present, as was already pointed out in \cite{Friedrichs1937} for the equivalent Friedrichs problem.

We show in Theorem~\ref{T:esspolyg} for two-dimensional domains that a polygonal corner of opening $\omega\in(0,2\pi)$ contributes an interval
$[\frac12-\frac{\sin\omega}{2\omega}, \frac12+\frac{\sin\omega}{2\omega}]$ to the essential spectrum of the Cosserat operator. This has been known for a while already \cite{CoDaLausanne2000}, but the proof has not yet been published. A corollary is an upper bound for the LBB constant, see also \cite{Stoyan1999,Stoyan2001}
\begin{equation}
\label{E:betamaj}
  \beta(\Omega)\le\sqrt{\frac12-\frac{\sin\omega}{2\omega}}\,.
\end{equation}
For the square $\Omega=\Box$, in \cite[(6.39)]{HorganPayne1983} a conjecture was offered that amounts to
$\beta(\Box)=\sqrt{2/7}=0.53\dots$, which is obviously incompatible with the upper bound 
$\beta\le\sqrt{\frac12-\frac1\pi}=0.42\dots$ from \eqref{E:betamaj}.
Although already conjectured in \cite{Stoyan1999}, it is still unknown whether this latter inequality is an equality or whether it is strict, that is, whether for the square there exist Cosserat eigenvalues below the minimum of the essential spectrum. In Section~\ref{S:Rectangles} below, we present numerical evidence suggesting that there are no such eigenvalues. But this is not yet proven.

For three-dimensional domains with edges, the two-dimensional corner domain transversal to the edge implies an inclusion of the corresponding interval in the essential spectrum, which therefore always contains such an interval symmetric with respect to the point $1/2$ (see Section~\ref{Ss:Edges}). This is very different from the case of smooth domains, where there exists the known example of the ball that has a Cosserat spectrum consisting (apart from the trivial point $1$) of a sequence of eigenvalues 
$\sigma_{k}=\frac{k}{2k+1}$, $k\ge1$, 
converging to $1/2$ from below and therefore has no spectrum in the interval $(1/2,1)$\cite{Cosserat1898b,Crouzeix1997,SimadervWahl2006}.

For three-dimensional bounded domains having conical boundary points with tangent cones of revolution, we present in Section~\ref{Ss:Cones} numerical results showing that there are also intervals contained in the essential spectrum of the Cosserat operator.

The Cosserat eigenvalue problem as originally formulated by E. and F. Cosserat \cite{Cosserat1898} can be written as follows: 
Find nontrivial $\bu\in H^{1}_{0}(\Omega)^{d}$ and $\sigma\in\C$ such that
\begin{equation}
\label{E:Coss1}
 \sigma\Delta \bu - \nabla\div \bu=0 \,.
\end{equation}
This is the spectral problem of the bounded selfadjoint operator 
$\Delta^{-1}\nabla\div$ on $H^{1}_{0}(\Omega)^{d}$, where $\Delta^{-1}$ is the inverse of the Laplace operator with Dirichlet conditions 
$\Delta: H^{1}_{0}(\Omega)\to H^{-1}(\Omega)$.

On the orthogonal complement of the kernel of $\div$, which is the eigenspace for the trivial eigenvalue $\sigma=0$, this operator is equivalent to the operator
\begin{equation}
\label{E:SS}
  \SS = \div\Delta^{-1}\nabla\,:\; L^{2}_{\circ}(\Omega) \to L^{2}_{\circ}(\Omega)\,,
\end{equation}
the Schur complement operator of the Stokes system. 

We define the \emph{Cosserat constant} $\sigma(\Omega)$ of the domain $\Omega$ as the minimum of the spectrum of the operator $\SS$.

It is then an exercise in elementary Hilbert space theory to show that there holds
\begin{equation}
\label{E:LBB-Coss}
 \sigma(\Omega) = \beta(\Omega)^{2}\,.
\end{equation}
To see this, write the LBB constant using the definition of the $H^{-1}$ norm and the fact that $\Delta$ is an isometry from $H^{1}_{0}(\Omega)$ to $H^{-1}(\Omega)$:
$$
 \beta(\Omega)^{2}
  = \inff_{q\ee\in\ee L^2_\circ(\Omega)}
   \frac{\Normc{\nabla q}{-1,\Omega}}{\DNormc{q}{0,\Omega}}
  = \inff_{q\ee\in\ee L^2_\circ(\Omega)}
   \frac{\big\langle {-}\Delta^{-1}\nabla q, \nabla q \big\rangle_\Omega}{\DNormc{q}{0,\Omega}}
  = \inff_{q\ee\in\ee L^2_\circ(\Omega)}
   \frac{\big\langle \SS q, q \big\rangle_\Omega}{\DNormc{q}{0,\Omega}}
  = \sigma(\Omega)
\,.
$$

For numerical approximations of the Cosserat eigenvalue problem, we will use the equivalent formulation as an eigenvalue problem for the Stokes system:

Find $\bu\in H^{1}_{0}(\Omega)^{d}$, 
$p\in L^{2}_{\circ}(\Omega) \setminus\{0\}$, $\sigma\in\C$ such that
\begin{gather}
\label{E:Stokesp}
\begin{aligned}
-\Delta \bu + \nabla p &= 0 \\
 \div\bu &=\sigma p \;.
\end{aligned}
\end{gather}

In the following, we will mainly discuss the essential spectrum of the Cosserat problem. 
This is the set of $\sigma\in\C$ such that the operator
\begin{equation}
\label{E:Ls}
  L_{\sigma} = \sigma\Delta - \nabla\div \,:\;
  H^{1}_{0}(\Omega)^{d} \to H^{-1}(\Omega)^{d}
\end{equation}
is not a Fredholm operator. 

From the variational formulation of the operator $L_{\sigma}$ for $\bu,\bv\in H^{1}_{0}(\Omega)^{2}$
\begin{equation}
\label{E:Lsweak}
  \langle L_{\sigma}\bu, \bv\rangle =
  -\sigma\int_{\Omega}\nabla\bu :\! \nabla\bv + \int_{\Omega}\div\bu\,\div\bv
\end{equation}
it is not hard to see that such $\sigma$ have to be real, between $0$ and $1$, and that $L_{\sigma}$ is Fredholm if and only if it is Fredholm of index $0$ and if and only if it is semi-Fredholm.

\section{Domains with conical points}\label{S:Conic}
We will show how Kondrat'ev's classical theory \cite{Kondratev67} applies to the Cosserat operator $L_\sigma$ when $\Omega$ is a \emph{domain with conical points} in $\R^d$. This means that the boundary of $\Omega$ is smooth except in a finite set $\gC$ of points $\bc$, called the corners of $\Omega$, and in the neighborhood of each corner $\bc$ the domain $\Omega$ is locally diffeomorphic to a \emph{regular cone} $\Gamma_\bc\,$, i.e., the section $G_\bc=\Gamma_\bc\cap\Sbb^{d-1}$ is a smooth domain in $\Sbb^{d-1}$.

From the discussion in \cite[\S2]{Mikhlin1973}, we know that the system $\sigma\Delta - \nabla\div$ is elliptic at any point of $\Omega$ if (and only if) $\sigma\not\in\{0,1\}$, and that the Dirichlet boundary condition covers $\sigma\Delta - \nabla\div$ at any smooth point of $\partial\Omega$ if, moreover, $\sigma\neq\frac12$.

In contrast with the smooth case when $0,\frac12,1$ are the only values for which $L_\sigma$ is not Fredholm, the corners of $\Omega$ cause an enlargement of this exceptional set, in general.

\subsection{The Mellin symbol of the Cosserat operator}\label{Ss:Mellin}
For determining the values $\sigma$ such that $L_{\sigma}$ in \eqref{E:Ls} is not Fredholm, we use Kondrat'ev's \cite{Kondratev67} technique of corner localization and Mellin transformation. 

Let us choose a corner $\bc$ and write $(r,\vartheta)\in\R_+\times G_\bc$ for the polar coordinates in the tangent cone $\Gamma_\bc$. The homogeneous Lam\'e system $\sigma\Delta - \nabla\div$ can be written as
\[
   \sigma\Delta - \nabla\div = r^{-2} \L_\sigma(\vartheta;r\partial_r,\partial_\vartheta)
\] 
where the $d\times d$ system $\L_\sigma$ has coefficients independent of $r$. The Mellin transformation $u\mapsto \int_0^\infty r^{-\lambda-1}u(r,\vartheta)\,dr$ transforms $r\partial_r$ into the multiplication by $\lambda$. The \emph{Mellin symbol} $\gA_\sigma^\bc$ at the corner $\bc$ is the operator valued function (known as \emph{operator pencil} in the Russian literature) defined as
\begin{equation}
\label{E:MellinS}
   \gA_\sigma^\bc(\lambda) = \L_\sigma(\vartheta;\lambda,\partial_\vartheta)\,:\;
   H^{1}_{0}(G_\bc)^{d} \to H^{-1}(G_\bc)^{d} ,\quad\lambda\in\C.
\end{equation}

\subsection{The Fredholm theorem}\label{Ss:Fredholm}
In Kondrat'ev's theory, the Fredholm property of $L_\sigma$ is studied in the framework of  weighted Sobolev spaces \cite{Kondratev67}. In the book \cite{KozlovMazyaRossmann97b} by Kozlov-Maz'ya-Rossmann, these spaces are defined as follows: For $\ell\in\N$ and $\beta\in\R$
\begin{equation}
\label{E:V}
   V^\ell_\beta(\Omega) = \{u\in L^2_{\rm loc}(\Omega):\ \  
   |\bx-\bc|^{\beta+|\alpha|-\ell}\partial^\alpha_\bx u\in L^2(\Omega)\ 
   \forall\bc\in\gC,\ \forall\alpha\in\N^d,\ |\alpha|\le\ell\}.
\end{equation}

\begin{lemma}
\label{L:H10}
Let $\Omega\subset\R^d$ be a domain with conical points.
The space $H^1_0(\Omega)$ coincides with the subspace of $V^1_0(\Omega)$ consisting of functions with zero boundary traces.
\end{lemma}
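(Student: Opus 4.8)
The plan is to prove the two inclusions separately, the essential analytic ingredient in both being a Hardy inequality near each conical point. By the very definition of $V^1_0(\Omega)$ with $\beta=0$, a function $u$ lies in $V^1_0(\Omega)$ precisely when $\nabla u\in L^2(\Omega)$ and $|\bx-\bc|^{-1}u\in L^2(\Omega)$ near every corner $\bc\in\gC$; away from the corners the weight $|\bx-\bc|$ is bounded below, so the second condition is automatic there and the whole question is local at each $\bc$. Using a partition of unity subordinate to a covering of $\overline\Omega$ by one neighborhood of each corner together with an open set staying away from $\gC$, and the local diffeomorphism onto the tangent cone $\Gamma_\bc$, I would reduce everything to a model statement on a cone, where $|\bx-\bc|$ is comparable to the radial variable $r$ and the $H^1$ seminorm transforms into an equivalent one.

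First I would establish the inclusion $H^1_0(\Omega)\subset V^1_0(\Omega)$. Since $H^1_0(\Omega)$ functions have zero trace by definition, only the weighted bound has to be checked, and it suffices to prove it for $u\in\Cinf_c(\Omega)$ and then pass to the limit. On the model cone, writing the Dirichlet energy in polar coordinates $(r,\vartheta)\in\R_+\times G_\bc$ as
\[
   \int_{\Gamma_\bc}|\nabla u|^2 = \int_0^\infty\!\!\int_{G_\bc}\Big(|\partial_r u|^2+\tfrac1{r^2}|\nabla_{G_\bc} u|^2\Big)\,r^{d-1}\,d\vartheta\,dr,
\]
I would bound the angular part from below by the Poincar\'e inequality on the section: since $G_\bc$ is a proper smooth subdomain of $\Sbb^{d-1}$, the first Dirichlet eigenvalue $\mu_1>0$ of the Laplace--Beltrami operator on $G_\bc$ is positive, whence $\int_{G_\bc}|u|^2\le\mu_1^{-1}\int_{G_\bc}|\nabla_{G_\bc}u|^2$ for every fixed $r$. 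Integrating against $r^{d-3}\,dr$ gives the Hardy inequality $\int_{\Gamma_\bc}|\bx-\bc|^{-2}|u|^2\le\mu_1^{-1}\int_{\Gamma_\bc}|\nabla u|^2$. Summing the localized contributions and using the trivial bound away from the corners yields $|\bx-\bc|^{-1}u\in L^2(\Omega)$, and density of $\Cinf_c(\Omega)$ in $H^1_0(\Omega)$ finishes this direction.

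For the reverse inclusion, let $u\in V^1_0(\Omega)$ have zero boundary trace. I would approximate $u$ by truncating near the corners: fix a smooth cutoff $\eta_\varepsilon$ equal to $0$ for $|\bx-\bc|<\varepsilon$, equal to $1$ for $|\bx-\bc|>2\varepsilon$, with $|\nabla\eta_\varepsilon|\le C/\varepsilon$, and set $u_\varepsilon=\eta_\varepsilon u$. The identity $\nabla u_\varepsilon=\eta_\varepsilon\nabla u+u\,\nabla\eta_\varepsilon$ shows that the first term tends to $\nabla u$ in $L^2$ by dominated convergence, while the commutator term is controlled exactly by the weighted condition:
\[
   \int|u\,\nabla\eta_\varepsilon|^2\le\frac{C}{\varepsilon^2}\int_{\varepsilon<|\bx-\bc|<2\varepsilon}|u|^2\le 4C\!\!\int_{\varepsilon<|\bx-\bc|<2\varepsilon}\frac{|u|^2}{|\bx-\bc|^2}\xrightarrow[\varepsilon\to0]{}0,
\]
since $|\bx-\bc|^{-1}u\in L^2(\Omega)$ and the annulus shrinks to a null set. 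Thus $u_\varepsilon\to u$ in the $H^1$ seminorm. Each $u_\varepsilon$ is supported away from the corners, where $\partial\Omega$ is smooth, and inherits the zero trace of $u$; by the classical characterization of $H^1_0$ on domains with smooth (or Lipschitz) boundary, $u_\varepsilon\in H^1_0(\Omega)$. Hence $u\in H^1_0(\Omega)$.

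The main obstacle is the Hardy inequality of the first direction, or rather the verification that the positive lower eigenvalue $\mu_1$ of the section survives the reduction from $\Omega$ to the model cone: one must check that the diffeomorphism flattening $\Omega$ onto $\Gamma_\bc$ distorts both the weight $|\bx-\bc|$ and the $H^1$ seminorm only by bounded factors, so that the clean Hardy constant on the cone transfers to a finite (not necessarily sharp) constant on $\Omega$. The rest is routine, and it is worth noting that the identification is special to the critical weight $\beta=0$: it is exactly this value that matches the scaling of the $H^1$ seminorm, which is why no lower-order weighted terms are needed.
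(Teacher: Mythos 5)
Your proof is correct, and it follows exactly the route the paper indicates: the paper does not write out the argument but refers to \cite[Lemma 6.6.1]{KozlovMazyaRossmann97b} and names the ``Poincar\'e inequality in angular variables'' as the key ingredient, which is precisely your Hardy-inequality step obtained by integrating the Dirichlet--Poincar\'e inequality on the section $G_\bc$ against $r^{d-3}\,\rd r$. The converse inclusion via the cutoff $\eta_\varepsilon$ with the commutator term controlled by $\Norm{\bx-\bc}{}^{-1}u\in L^2$ is the standard complement and is carried out correctly.
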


This lemma is proved using a Poincar\'e inequality in angular variables, see \cite[Lemma 6.6.1]{KozlovMazyaRossmann97b}. Then \cite[Theorem 6.3.3]{KozlovMazyaRossmann97b} implies the following result.

\begin{theorem}
\label{T:Mellin}
Let $\Omega\subset\R^d$ be a domain with conical points.
Assume that $\sigma$ does not belong to $\{0,\frac12,1\}$ and that for each corner $\bc$ and all complex numbers $\lambda$ with real part $1-\frac d2$, the Mellin symbol $\gA_\sigma^\bc(\lambda)$ is invertible. Then $L_\sigma$ is Fredholm.
\end{theorem}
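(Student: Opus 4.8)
The statement is essentially the transcription of Kondrat'ev's Fredholm criterion into the present situation, so the plan is to verify that the hypotheses of \cite[Theorem 6.3.3]{KozlovMazyaRossmann97b} are met and then to read off its conclusion. First I would move the problem into the weighted Sobolev setting of \eqref{E:V}. By Lemma~\ref{L:H10}, the energy space $H^1_0(\Omega)$ coincides with the subspace of $V^1_0(\Omega)$ of functions with vanishing boundary trace; by duality with respect to the $L^2$ pairing, $H^{-1}(\Omega)$ is identified with the negative-order weighted space $V^{-1}_0(\Omega)$. Thus the variational operator $L_\sigma$ of \eqref{E:Ls} is realized as a map between weighted spaces with weight exponent $\beta=0$, which is precisely the framework in which the cited theorem is formulated.

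Next I would check the local ellipticity data. Away from the corner set $\gC$, the pair consisting of $L_\sigma$ and the Dirichlet condition must form an elliptic boundary value problem. This is exactly what the discussion of Mikhlin recalled above provides: the system $\sigma\Delta-\nabla\div$ is properly elliptic in the interior as soon as $\sigma\notin\{0,1\}$, and the Dirichlet condition satisfies the Shapiro--Lopatinskii covering condition at every smooth boundary point as soon as $\sigma\neq\tfrac12$. The excluded set $\{0,\tfrac12,1\}$ in the hypothesis is therefore exactly what is needed to guarantee these two conditions simultaneously.

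It remains to treat the corners, where the theory reduces the Fredholm property to the invertibility of the operator pencil along one critical line. For a second-order variational operator acting on the energy space $V^1_0$ (smoothness $\ell=1$, weight $\beta=0$) in dimension $d$, the Parseval identity for the Mellin transform $u\mapsto\int_0^\infty r^{-\lambda-1}u\,dr$ places the critical line at $\Re\lambda=\ell-\beta-\tfrac d2=1-\tfrac d2$. Hence the hypothesis that each $\gA_\sigma^\bc(\lambda)$ be invertible for all $\lambda$ with $\Re\lambda=1-\tfrac d2$ is precisely the corner condition required by \cite[Theorem 6.3.3]{KozlovMazyaRossmann97b}. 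Combining the interior and smooth-boundary ellipticity with this corner condition, that theorem yields the Fredholm property of $L_\sigma$.

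The main obstacle is bookkeeping rather than a deep estimate: one must verify that the weak setting $H^1_0\to H^{-1}$ matches the weighted-space formulation of the cited theorem, which requires both the identification of $H^{-1}$ with $V^{-1}_0$ and the correct localization of the critical line at $\Re\lambda=1-\tfrac d2$. One must also confirm that the local diffeomorphism flattening $\Omega$ onto the model cone $\Gamma_\bc$ near each corner preserves ellipticity and the corner pencil up to a Fredholm-irrelevant perturbation, so that the reduction to the tangent cone is legitimate.
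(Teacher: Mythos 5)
Your proposal follows essentially the same route as the paper: identify $H^1_0(\Omega)$ with the trace-zero subspace of $V^1_0(\Omega)$ via Lemma~\ref{L:H10}, note that $\sigma\notin\{0,\tfrac12,1\}$ gives interior ellipticity and the covering condition at smooth boundary points, locate the critical line at $\Re\lambda=1-\tfrac d2$ (your computation $\ell-\beta-\tfrac d2$ with $\ell=1$, $\beta=0$ agrees with the paper's rule of thumb), and invoke \cite[Theorem 6.3.3]{KozlovMazyaRossmann97b}. The only nuance the paper adds that you do not mention explicitly is why one must use the Kozlov--Maz'ya--Rossmann version rather than Kondrat'ev's original theorem (the latter requires Sobolev exponent $\ell\ge2$), but since you cite the correct reference directly this does not affect the validity of your argument.
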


Note that we cannot apply \cite[Theorem  3.1]{Kondratev67} right away because it is assumed there that the Sobolev exponent $\ell$ is at least $2$. For this reason we have to use the extension performed in \cite{KozlovMazyaRossmann97b}. The assumption $\sigma\not\in\{0,\frac12,1\}$ ensures that the boundary value problem $L_\sigma$ is elliptic.

Concerning the critical abscissa $1-\frac d2$, there is a simple rule of thumb for determining it: A corner $\bc$ being chosen together with a non-zero angular function $W\in H^1_0(G_\bc)$ and a compactly supported cut-off function $\chi$ equal to $1$ near $\bc$, we find that $1-\frac d2$ is the smallest real number $\eta$ such that all functions of the form $\chi\, r^{\lambda}W(\vartheta)$ belong to $H^1_0(\Gamma_\bc)$, whenever $\Re\lambda>\eta$.

\section{Plane polygons}\label{S:Poly}

Let $\Omega\subset\R^{2}$ be a polygon, that is, a bounded Lipschitz domain the boundary $\partial\Omega$ of which consists of a finite set of straight segments. This set being chosen in a minimal way, the corners $\bc$ of $\Omega$ are the ends of these segments. Each corner $\bc$ belongs to two neighboring segments and the tangent cone $\Gamma_\bc$ is a plane sector, the opening of which is denoted by $\omega_\bc$. Thus the section $G_\bc$ can be identified with the interval $(-\frac{\omega_\bc}{2},\frac{\omega_\bc}{2})$.

\subsection{The Mellin determinant}\label{Ss:Mdet}

In two-dimension, the invertibility of the Mellin symbol $\gA_\sigma^\bc(\lambda)$, which is a $2\times2$ Sturm-Liouville system on an interval with polynomial dependence on $\lambda$, can be further reduced to the non-vanishing of a scalar holomorphic function, which we may call Mellin determinant $\gM_\sigma^\bc(\lambda)$. This Mellin determinant is well known for the case of the Dirichlet problem of the Lam\'e system of linear elasticity. It is constructed and described in detail in the book \cite[Chapter 3]{KozlovMazyaRossmann01}, where also references to earlier works can be found.
We can use the results of these calculations, simply by noticing that the operator $L_{\sigma}$ \emph{is} the Lam\'e operator, if we set
\[
  \sigma
  = 2\nu-1
\]
with 
the Poisson ratio $\nu$.
What is non-standard here, compared to the discussion in \cite{KozlovMazyaRossmann01}, is first that the $\sigma$ considered here correspond to the ``non-physical'' range 
$\frac12<\nu<1$,
and second that we consider the question of loss of $H^{1}$ regularity, that is, zeros of the Mellin determinant on the critical line $\{\Re\lambda=1-\frac d2=0\}$.

In \cite[(3.1.22/23)]{KozlovMazyaRossmann01}, the Mellin determinant for a corner of opening angle $\omega$ is given as 
$$
 \gM(\lambda)= 
 \lambda^{-2}\big( (3-4\nu)^{2}\sin^{2}\lambda\omega - \lambda^{2}\sin^{2}\omega \big)\,.
$$
The characteristic equation $\gM(\lambda)=0$ can therefore be written as the two equations
\begin{equation}
\label{E:char}
   (1-2\sigma)\frac{\sin\lambda\omega}{\lambda} = 
   \pm\sin\omega.
\end{equation}

In the following paragraph \ref{Ss:SingFun} we present a new and straightforward way to calculate $\gM$ together with the \emph{singular functions} associated with the roots $\lambda$ of $\gM$, i.e.\ the solutions $\bw_\lambda$ of the equation $L_\sigma\bw=0$ that are homogeneous of degree $\lambda$.

\begin{remark}
\label{R:sings}
These singular functions play a double role here: 
If $\Re\lambda=0$, they do not belong to $H^{1}_{0}$ near the corner, which is the reason why $L_{\sigma}$ is then not a Fredholm operator and $\sigma$ belongs to the essential spectrum, see Theorems~\ref{L:charpolyg} and~\ref{T:esspolyg}. Figure~\ref{F3.1} shows this case: As a function of $\sigma$ and $\omega$, the imaginary part of $\lambda$ is plotted. 
On the other hand, if $\Re\lambda>0$, then the singular functions $\bw_\lambda$ belong to $H^{1}_{0}$ near the corner and they describe the corner asymptotics of any solution in $H^{1}_{0}(\Omega)$ of $L_{\sigma}\bu=\bff$ with smooth $\bff$, in particular of Cosserat eigenfunctions for such eigenvalues $\sigma$ that are not in the essential spectrum:
\begin{equation}
\label{E:reg}
   \bu\in H^{1+s}(\Omega)^2,\quad\forall 
   s<\min\{\Re\lambda \mid \Re\lambda>0,\, \lambda\text{ root of \eqref{E:char} } \}
\end{equation}
If $\Re\lambda$ is close to zero, the derivatives of $\bu$ (thus the ``pressure'' part $p$ of the Stokes solution in \eqref{E:Stokesp}) will go to infinity quickly at the corner, which poses problems for the numerical approximation of the Cosserat eigenvalue problem, see the discussion in Section~\ref{S:Rectangles}. In Figure~\ref{F3.2}, we plot $\lambda$ as function of $\sigma$ and $\omega$ in this case. Note that for $0\le\sigma\le1$, $\lambda$ is either real or purely imaginary. 
\end{remark}

\begin{figure}
\centerline{\includegraphics[scale=0.41]{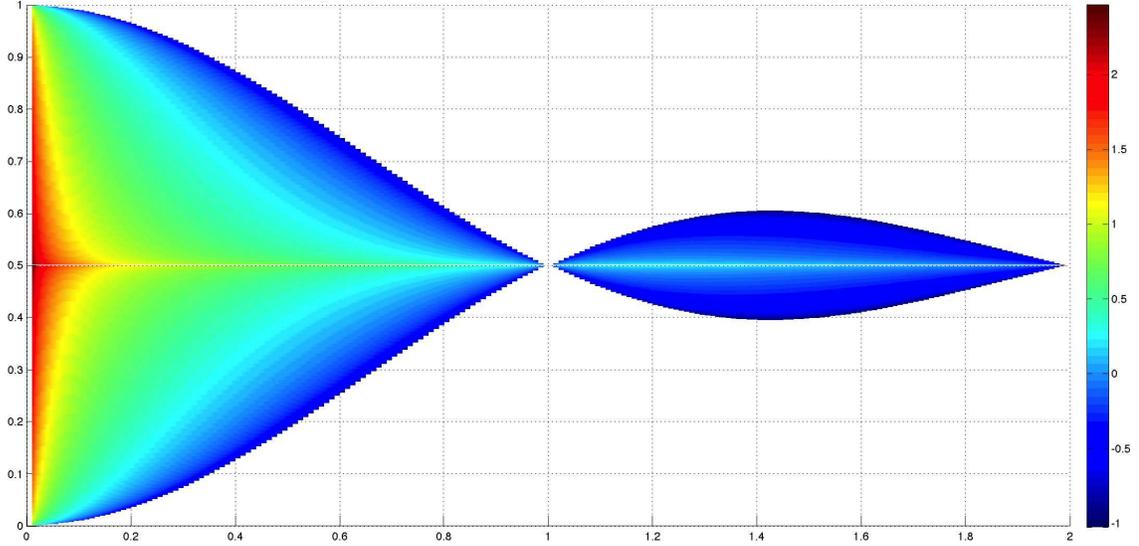}}
\caption{Color plot of the decimal logarithm of $|\Im\lambda|$ for purely imaginary roots $\lambda$ of the characteristic equation \eqref{E:char} as a function of the opening $\omega/\pi\in(0,2)$ (abscissa) and the Cosserat spectral parameter $\sigma\in[0,1]$ (ordinate).}
\label{F3.1}
\end{figure}

\begin{figure}
\centerline{\includegraphics[scale=0.41]{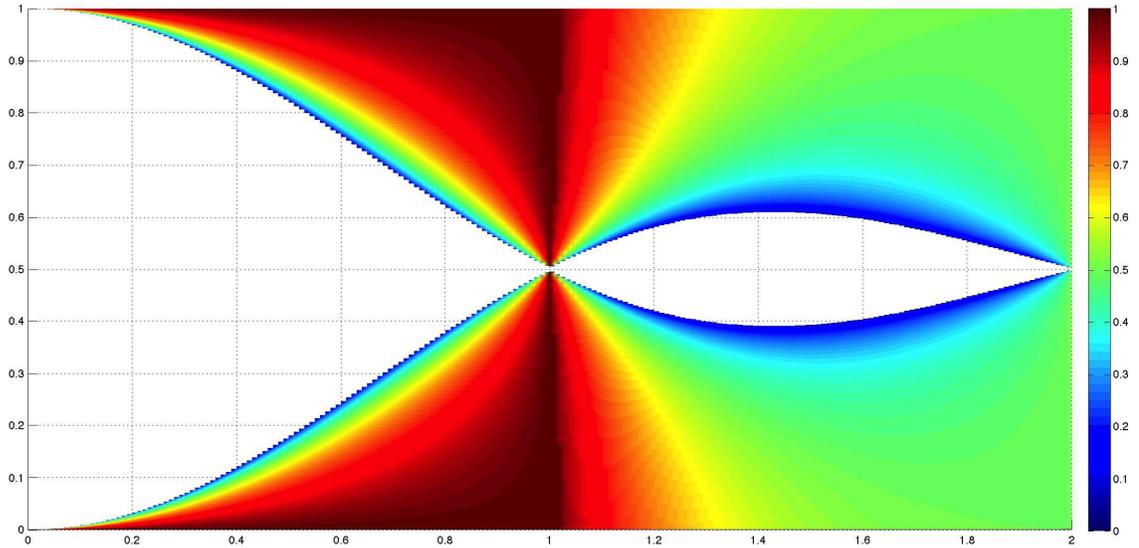}}
\caption{Color plot of smallest positive real roots $\lambda$ of the characteristic equation \eqref{E:char} as a function of the opening $\omega/\pi\in(0,2)$ (abscissa) and the Cosserat spectral parameter $\sigma\in[0,1]$ (ordinate).}
\label{F3.2}
\end{figure}

\subsection{Singular functions}\label{Ss:SingFun}
Let us choose a corner $\bc$ and drop the reference to that corner in the notation.
We assume for simplicity that $\bc$ is the origin.
The symbol $\gA_\sigma(\lambda)$ is not invertible iff it has a nonzero kernel. We note that for 
$\WW\in H^1_0(-\frac\omega2,\frac\omega2)^2$ we have the equivalence
\begin{equation}
\label{E:equiv}
   \WW\in\ker\gA_\sigma(\lambda)\quad \Longleftrightarrow\quad
   (\sigma\Delta - \nabla\div)(r^\lambda\WW(\theta))=0.
\end{equation} 
So we first solve
\begin{equation}
\label{E:sansdir}
   (\sigma\Delta - \nabla\div)(r^\lambda\WW(\theta)) = 0
\end{equation}
\emph{without boundary conditions} and in a second step find conditions on $\lambda$ so that there exist nonzero solutions satisfying the Dirichlet conditions at $\theta=\pm\frac\omega2$.

{\sc Step 1.} We know from \cite[Theorem 2.1]{CostabelDauge93c} that for any $\lambda\in\C$ the solutions of \eqref{E:sansdir} form a space of dimension 4, which we denote by $\gW_\sigma(\lambda)$. 
Moreover \cite[\S2.b(iii)]{CostabelDauge93c} tells us that if $\lambda\not\in\N$, it is sufficient to look for each component of the homogeneous function $r^\lambda\WW(\theta)$ in the space generated by $z^\lambda$, $\bar z^\lambda$, $z^{\lambda-1}\bar z$, and $\bar z^{\lambda-1}z$. Here we have identified $\R^{2}$ with the complex plane $\C$ by the formula $z=x_1+ix_2$. So the Ansatz space for $\bw=r^\lambda\WW(\theta)$ has the basis
\begin{gather*}
   \bw_{(1)} = \begin{pmatrix}1\\i\end{pmatrix} z^\lambda,\quad
   \bw_{(2)} = \begin{pmatrix}1\\i\end{pmatrix} \bar z^\lambda,\quad
   \bw_{(3)} = \begin{pmatrix}1\\i\end{pmatrix} z^{\lambda-1}\bar z,\quad
   \bw_{(4)} = \begin{pmatrix}1\\i\end{pmatrix} \bar z^{\lambda-1} z,\\
   \widetilde\bw_{(1)} = \begin{pmatrix}1\\-i\end{pmatrix} z^\lambda,\quad
   \widetilde\bw_{(2)} = \begin{pmatrix}1\\-i\end{pmatrix} \bar z^\lambda,\quad
   \widetilde\bw_{(3)} = \begin{pmatrix}1\\-i\end{pmatrix} z^{\lambda-1}\bar z,\quad
   \widetilde\bw_{(4)} = \begin{pmatrix}1\\-i\end{pmatrix} \bar z^{\lambda-1} z.
\end{gather*}
Applying $L_\sigma\bw=0$ to these vector functions is an easy computation using complex derivatives for functions of $z$ and $\bar z$ and writing for a scalar function $f$ the formulas
\[
\nabla f = \dfrac{\partial f}{\partial z}\begin{pmatrix}1\\i\end{pmatrix} + 
    \dfrac{\partial f}{\partial \bar z}\begin{pmatrix}1\\-i\end{pmatrix},\quad 
\div\begin{pmatrix}f\\i f\end{pmatrix}=2\dfrac{\partial f}{\partial \bar z},\quad 
\div\begin{pmatrix}f\\-i f\end{pmatrix}=2\dfrac{\partial f}{\partial z}\,,
\]
and
\[
   \Delta f=4\dfrac{\partial ^2f}{\partial z\,\partial  \bar z}\,.
\]
In this way we find that for any $\lambda\in\C\setminus\{0,1\}$, the following four vectors $\bw^k_\lambda$ given by 
\[
   \bw^1_\lambda = \bw_{(1)},\quad
   \bw^2_\lambda = \widetilde\bw_{(2)},\quad 
   \bw^3_\lambda = \bw_{(3)} + \frac{2\sigma-1}{\lambda}\, \widetilde\bw_{(1)},\quad
   \bw^4_\lambda = \widetilde\bw_{(4)} + \frac{2\sigma-1}{\lambda}\,  \bw_{(2)},
\]
form a basis\footnote{To obtain a basis valid for $\lambda=0$, we could define $\bw^3_\lambda$ by 
$\bw_{(3)}+\widetilde\bw_{(4)} + \frac{2\sigma-1}{\lambda}\, (\widetilde\bw_{(1)}+\bw_{(2)} -\bw_{(1)}-\widetilde\bw_{(2)})$ and $\bw^4_\lambda$ by 
$\bw_{(3)}-\widetilde\bw_{(4)} + \frac{2\sigma-1}{\lambda}\, (\widetilde\bw_{(1)}-\bw_{(2)} +\bw_{(1)}-\widetilde\bw_{(2)})$.} for the space $\gW_\sigma(\lambda)$.

\smallskip
{\sc Step 2.} We look for conditions on $\lambda$ so that there exists a nonzero $\bw\in\gW_\sigma(\lambda)$ which satisfies the Dirichlet conditions on $\theta=\pm\frac\omega2$. We note that, setting
\[
   a=\frac{\sin(\lambda {-}1)\omega}{\sin(\lambda \omega)}\quad \mbox{and}\quad
   b=\frac{\sin\omega}{\sin(\lambda \omega)}\,,
\]
for any $\varepsilon\in\R$ the function
\[
   \bw = \bw_{(3)} - a \bw_{(1)} - b \bw_{(2)} + 
   \varepsilon \left( \widetilde\bw_{(4)} - a \widetilde\bw_{(2)} - b \widetilde\bw_{(1)}\right)
\]
is zero on $\theta=\pm\frac\omega2$. It is clear that $\bw$ belongs to $\gW_\sigma(\lambda)$ iff
$\varepsilon=\pm1$ and $-\varepsilon b = \frac{2\sigma-1}{\lambda}$, i.e. 
\begin{equation}
\label{E:sigma}
   \sigma = \frac12\Big(1-\varepsilon\frac{\lambda\sin\omega}{\sin(\lambda \omega)}\Big),\quad
   \varepsilon=\pm1.
\end{equation}
Note that this is the same equation as \eqref{E:char}.
The associated singular function is
\begin{equation}
\label{E:wlam}
   \bw_\lambda = \bw^3_\lambda + \varepsilon \bw^4_\lambda - 
   a (\bw^1_\lambda + \varepsilon\bw^2_\lambda).
\end{equation}

\subsection{Singular sequences at corners and essential spectrum}\label{Ss:SingSeq}

The result of the Kondrat'ev theory for our polygon can be written as follows.

\begin{theorem}
\label{L:charpolyg}
Let $\Omega$ be a polygon in the plane. Assume that $\sigma$ does not belong to $\{0,\frac12,1\}$.
Then the Cosserat operator $L_{\sigma}$ is Fredholm from $H^{1}_{0}(\Omega)^2$ to $H^{-1}(\Omega)^2$ if and only if for each corner $\bc\in\gC$ and $\omega=\omega_{\bc}$ the characteristic equations \eqref{E:char} have no solution on the line $\Re\lambda=0$.
\end{theorem}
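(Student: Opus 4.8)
The plan is to combine Kondrat'ev's sufficient condition, already packaged as Theorem~\ref{T:Mellin}, with the explicit description of $\ker\gA_\sigma^\bc(\lambda)$ obtained in \S\ref{Ss:SingFun}, and then to supply the converse by constructing a singular (Weyl) sequence. First I would record the dictionary between the two formulations: by the equivalence \eqref{E:equiv} together with Steps~1 and~2, for $\lambda\notin\{0,1\}$ the symbol $\gA_\sigma^\bc(\lambda)$ has nontrivial kernel if and only if $\lambda$ solves the characteristic equation \eqref{E:char} (equivalently \eqref{E:sigma}), and in that case a generator of the kernel is the angular profile $\WW$ of the singular function $\bw_\lambda=r^\lambda\WW(\theta)$ of \eqref{E:wlam}. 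Thus ``no root of \eqref{E:char} on $\{\Re\lambda=0\}$ at any corner'' is exactly ``$\gA_\sigma^\bc(\lambda)$ invertible for all $\lambda$ with $\Re\lambda=0=1-\frac d2$ and all $\bc$''. With this translation, the \emph{if} direction is precisely Theorem~\ref{T:Mellin}.

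For the \emph{only if} direction I would argue by contraposition: assuming a root $\lambda_0=i\tau$ of \eqref{E:char} at some corner $\bc$, I would produce a singular sequence, i.e.\ fields $\bu_n\in H^1_0(\Omega)^2$ with $\Norm{\bu_n}{1,\Omega}=1$, $\bu_n\rightharpoonup 0$ weakly, and $L_\sigma\bu_n\to 0$ in $H^{-1}(\Omega)^2$. Since $L_\sigma$ is Fredholm if and only if it is semi-Fredholm (noted at the end of the introduction) and a semi-Fredholm operator admits no singular sequence, this forces $L_\sigma$ not to be Fredholm. The natural setting is the logarithmic change of variables $t=\log r$, which turns the tangent cone into the strip $\R_t\times(-\frac\omega2,\frac\omega2)$, sends the $H^1$ seminorm to the flat Dirichlet integral $\int(|\partial_t\,\cdot\,|^2+|\partial_\theta\,\cdot\,|^2)\,dt\,d\theta$, makes $L_\sigma$ into an operator $\hat L_\sigma$ with coefficients independent of $t$, and turns the homogeneous solution $r^{i\tau}\WW(\theta)$ into the exact, $t$-bounded solution $\bw=e^{i\tau t}\WW(\theta)$ of $\hat L_\sigma\bw=0$, with $\WW\in H^1_0(-\frac\omega2,\frac\omega2)^2$ vanishing at $\theta=\pm\frac\omega2$.

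Then I would set $\bu_n=c_n\,\chi\,\psi_n(t)\,\bw$, where $\chi$ is a fixed cutoff confining the support to a conic neighborhood of $\bc$ in which $\Omega$ agrees with $\Gamma_\bc$, where $\psi_n$ is a plateau cutoff in $t$ supported on an interval of length $\sim n$ marching toward $t=-\infty$ (that is, $r\to 0$), with mutually disjoint supports and $|\psi_n'|=O(1/n)$, $|\psi_n''|=O(1/n^2)$, and where $c_n$ normalizes $\Norm{\bu_n}{1,\Omega}=1$. Since $|\bw|=|\WW(\theta)|$ is $t$-independent and $\WW\not\equiv 0$ vanishes at the endpoints, the $\partial_\theta$-contribution alone gives $\Norm{\psi_n\bw}{1}^2\sim c\,n$ with $c>0$, so $c_n\sim n^{-1/2}$ and $\DNorm{\bu_n}{0,\Omega}$ stays bounded; disjointness of supports then yields $\bu_n\rightharpoonup 0$, while $\Norm{\bu_n}{1,\Omega}=1$ rules out any strongly convergent subsequence. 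For the residual, because $\hat L_\sigma\bw=0$ every term in which all derivatives fall on $\bw$ cancels, so $\hat L_\sigma\bu_n=c_n[\hat L_\sigma,\chi\psi_n]\bw$ involves only $\psi_n'$ and $\psi_n''$; testing against $\bv$ with $\Norm{\bv}{1,\Omega}\le1$ and using the Poincar\'e inequality in $\theta$ to control $\DNorm{\bv}{0}$, one gets $\DNorm{L_\sigma\bu_n}{-1,\Omega}\lesssim c_n\big(\DNorm{\psi_n'}{0,\R}+\DNorm{\psi_n''}{-1,\R}\big)\lesssim c_n\,n^{-1/2}\lesssim n^{-1}\to 0$.

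The main obstacle will be this last commutator estimate: one must verify carefully that after the cancellation of the exact-solution terms every surviving contribution carries at least one factor $\psi_n'$, hence decays in $L^2(dt)$ like $n^{-1/2}$, and that the $\psi_n''$ terms, once paired with the test field and integrated by parts a single time, reduce to the same order. A secondary technical point is that the diffeomorphism straightening $\partial\Omega$ near $\bc$ and the cutoff $\chi$ perturb $\hat L_\sigma$ only by operators of lower order supported away from $r=0$, whose contributions, measured in $H^{-1}$ and weighted by $c_n\to 0$, are negligible; this is exactly the local, corner-by-corner character of the Kondrat'ev theory. Once the singular sequence is in hand the conclusion is immediate.
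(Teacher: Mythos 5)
Your proposal is correct, and the ``if'' direction is handled exactly as in the paper (via Theorem~\ref{T:Mellin} plus the dictionary between roots of \eqref{E:char} and the kernel of $\gA_\sigma^\bc(\lambda)$ from \S\ref{Ss:SingFun}). Where you genuinely diverge is the ``only if'' direction. The paper also builds a singular sequence from the singular function $\bw_\lambda$, but by a different mechanism: it keeps a \emph{fixed} cutoff $\chi$ and perturbs the exponent, setting $\lambda_n=\lambda+\frac1n$ and choosing $\sigma_n$ so that $L_{\sigma_n}\bw_{\lambda_n}=0$ exactly; then $L_\sigma\bu_{\lambda_n}=(\sigma-\sigma_n)\Delta\bu_{\lambda_n}+\bff_n$ with $\bff_n$ supported in a fixed annulus away from the corner and uniformly bounded, while $|\bu_{\lambda_n}|_{1,\Omega}\to\infty$ because $\Re\lambda_n\downarrow0$. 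This sidesteps all commutator estimates: the only error terms are a fixed-support remainder and $(\sigma-\sigma_n)\Delta\bu_{\lambda_n}$, the latter controlled by the isometry $\Delta:H^1_0\to H^{-1}$ and $\sigma_n\to\sigma$. Your route instead passes to $t=\log r$, uses the exact $t$-bounded solution $e^{i\tau t}\WW(\theta)$ of the translation-invariant model operator, and truncates with marching plateau cutoffs; the price is the commutator bookkeeping you correctly identify as the main obstacle (and which does close, since $c_n\sim n^{-1/2}$ and $\|\psi_n'\|_{L^2}\sim n^{-1/2}$), while the payoff is a more portable construction --- it produces a genuinely weakly null, pairwise ``orthogonal'' Weyl sequence and transfers verbatim to translation-invariant settings such as the edge case of Theorem~\ref{T:essedge}, whereas the paper's trick relies on having the explicit analytic family $\lambda\mapsto\bw_\lambda$ of homogeneous solutions at nearby parameters. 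One point common to both arguments that you might make explicit: the case $\lambda=0$ (the endpoints $\sigma=\frac12\pm\frac{\sin\omega}{2\omega}$, where $\tau=0$ and your $\partial_t$-term vanishes but the $\partial_\theta$-term still gives the $\sim cn$ growth) needs the modified basis from the footnote in \S\ref{Ss:SingFun} or a closedness-of-the-non-Fredholm-set argument.
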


The ''if'' part is a consequence of Theorem \ref{T:Mellin}. For the ``only if'' part, we could simply quote \cite[Remark 6.3.4]{KozlovMazyaRossmann97b}, but we prefer to present a rather simple explicit proof in the spirit of \cite[\S9.D]{Dauge88}, namely the  construction of a singular sequence approximating the non-$H^{1}$ corner singularity.

\medskip
Choose a corner $\bc$, which we assume to sit at the origin, set $\omega_\bc=\omega$. Let $R>0$ be such that $\Omega$ coincides with the plane sector $\Gamma_\bc$ in the ball of center $\bc$ and radius $2R$.
Now choose a cut-off function $\chi\in \sC^1(\R^{2})$ such that $\chi(x)=1$ for $|x|\leq R$ and $\chi(x)=0$ for $|x|>2R$, and for any complex $\lambda$ set $\bu_\lambda =\chi\,\bw_\lambda$ with $\bw_\lambda$ the singular function defined in \eqref{E:wlam}. 
Then $\bu_\lambda=0$ on $\partial\Omega$. Therefore $\bu_\lambda\in H^1_0(\Omega)^2$ if and only if $\Re \lambda >0$. 

\smallskip
Let $\sigma$ be such that the equation \eqref{E:char} has a solution $\lambda\in i\R$, and take $\varepsilon\in\{\pm1\}$ so that $\sigma =\frac12\,(1{-}\varepsilon\, \frac{\lambda \sin\omega}{\sin(\lambda \omega)})$. Define
\begin{equation}
\label{E:lamn}
   \lambda_n=\lambda+\frac1n\quad\mbox{and}\quad
   \sigma_n =
   \frac12\,\Big(1{-}\varepsilon\, \frac{\lambda_n \sin\omega}{\sin(\lambda_n \omega)}\Big).
\end{equation}
Then
\begin{equation}
\label{E:Lsigmau}
   L_{\sigma}\bu_{\lambda_n}=(\sigma {-}\sigma_{n})\Delta \bu_{\lambda _n}+\bff_n 
   \quad\mbox{ with }\quad
   \bff_n = L_{\sigma_{n}}\bu_{\lambda_n}\,.
\end{equation}
From $L_{\sigma_{n}}\bw_{\lambda_n}=0\,$ follows that $\bff_{n}=0$ for $|x|\not\in[R,2R]$ and that $\|\bff_n\|_{0,\Omega}$ can be estimated by 
$\|\bw_{\lambda_{n}}\|_{H^{1}(\{R<|x|<2R\})}$, which remains bounded as $n\to\infty$. 
Note that for $n\to\infty$, $\sigma_n {-}\sigma\to 0$ and 
$\|\bu_{\lambda_n}\|_{0,\Omega}$ remains bounded, while $|\bu_{\lambda_n}|_{1,\Omega}\to\infty$.
Using the fact that $\Delta$ is an isometry from $H^{1}_{0}(\Omega)$ to $H^{-1}(\Omega)$, we obtain with \eqref{E:Lsigmau}
\begin{equation}
\label{E:lim0}
\lim_{n\to\infty} \frac{|L_{\sigma}\bu_{\lambda_n}|_{-1,\Omega}}
{|\bu_{\lambda _n}|_{1,\Omega}}=0.
\end{equation}
Altogether, this shows that $L_{\sigma}$ cannot satisfy an a-priori estimate of the form
\[
 |\bu|_{1,\Omega} \le \alpha |L_{\sigma}\bu|_{-1,\Omega} + \beta \|\bu\|_{0,\Omega}
 \quad\mbox{ with constants }\alpha,\beta\ge0.
\]
Hence $L_{\sigma}$ is not Fredholm and $\sigma$ belongs to the essential spectrum.

\bigskip
We can now conclude in the case of polygonal domains.

\begin{theorem}
\label{T:esspolyg}
Let $\Omega\subset\R^{2}$ be a polygon with corner angles $\omega_{\bc}\in(0,2\pi)$, $\bc\in\gC$. Then the essential spectrum of the Cosserat problem in $\Omega$ is given by
\begin{equation}
\label{E:esspolyg}
 \sigma_{\mathrm{ess}}(\SS) = \{1\}\cup 
   \bigcup_{\bc\in\gC}
   \Big[\frac12 - \frac{\sin\omega_{\bc}}{2\omega_{\bc}}, \frac12 + \frac{\sin\omega_{\bc}}{2\omega_{\bc}}\Big]\;.
\end{equation} 
The LBB constant satisfies 
\begin{equation}
\label{E:LBBpolyg}
 0<\beta(\Omega) \le\, \min_{\bc\in\gC} 
  \Big(\frac12 - \frac{\sin\omega_{\bc}}{2\omega_{\bc}}\Big)^{\frac12} \;.
\end{equation}
\end{theorem}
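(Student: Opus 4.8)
The plan is to reduce everything to Theorem~\ref{L:charpolyg}, combined with two structural facts about the self-adjoint operator $\SS$: that it is positive with $\min\sigma(\SS)=\beta(\Omega)^2$, and that $\sigma_{\mathrm{ess}}(\SS)$ is a closed set. First I would record that, $\Omega$ being a bounded Lipschitz domain, the classical LBB (Bogovskii) result gives $\beta(\Omega)>0$, so $\SS\ge\beta(\Omega)^2\,\Id>0$ on $L^2_\circ(\Omega)$ and in particular $0\notin\sigma(\SS)$. For $\sigma\ne0$ the relation $\SS q=\sigma q$ is equivalent, via $\bu=\Delta^{-1}\nabla q$ and $q=\frac1\sigma\div\bu$, to $L_\sigma\bu=0$, and the same Schur-complement bookkeeping shows that for $\sigma\ne0$ one has $\sigma\in\sigma_{\mathrm{ess}}(\SS)$ if and only if $L_\sigma$ fails to be Fredholm. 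This is the bridge that lets me import Theorem~\ref{L:charpolyg}.

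Next, for $\sigma\notin\{0,\tfrac12,1\}$, I would insert the purely imaginary ansatz $\lambda=i\tau$ into the characteristic equation \eqref{E:char}. Using $\sin(i\tau\omega)=i\sinh(\tau\omega)$, equation \eqref{E:char} becomes $(1-2\sigma)\frac{\sinh(\tau\omega)}{\tau}=\pm\sin\omega$, i.e. $\sigma=\frac12\big(1-\varepsilon\,\sin\omega\,\frac{\tau}{\sinh(\tau\omega)}\big)$ with $\varepsilon=\pm1$. The function $\tau\mapsto\frac{\tau}{\sinh(\tau\omega)}$ is even, equals $\frac1\omega$ at $\tau=0$, is strictly decreasing on $(0,\infty)$ and tends to $0$ at infinity, so as $\tau$ runs over $\R\setminus\{0\}$ and $\varepsilon$ over $\{\pm1\}$ the resulting $\sigma$ fill exactly the open interval $\big(\frac12-\frac{\sin\omega}{2\omega},\frac12+\frac{\sin\omega}{2\omega}\big)$ with the midpoint $\frac12$ removed (the endpoints arising in the limit $\tau\to0$, and $\frac12$ in the limit $|\tau|\to\infty$). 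Thus by Theorem~\ref{L:charpolyg} this punctured open interval, for each corner, lies in $\sigma_{\mathrm{ess}}(\SS)$; conversely any $\sigma\notin\{0,\tfrac12,1\}$ lying outside all these intervals produces a root-free critical line and hence a Fredholm $L_\sigma$.

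Finally I would close the gaps at the three exceptional values. Since $\sigma_{\mathrm{ess}}(\SS)$ is closed and contains the punctured open interval of each genuine corner (for which $\omega_\bc\ne\pi$), it contains the closure, namely the full closed interval, which simultaneously supplies the endpoints and the midpoint $\frac12$. The value $1$ is added by hand: at $\sigma=1$ the principal symbol $-|\xi|^2\Id+\xi\xi^\top$ is singular, so $L_1$ is not even elliptic, hence not Fredholm, and $1\in\sigma_{\mathrm{ess}}(\SS)$. The value $0$ must instead be \emph{excluded}, and this is the one genuinely delicate point: $L_0=-\nabla\div$ is also non-elliptic, yet $0\notin\sigma_{\mathrm{ess}}(\SS)$. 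The resolution is that $\SS$ acts on $L^2_\circ(\Omega)$, where the infinite-dimensional kernel of $\div$ responsible for the non-Fredholmness of $L_0$ has been quotiented out, and positivity $\SS\ge\beta(\Omega)^2\,\Id$ keeps $0$ out of the spectrum altogether. Combining the three cases yields \eqref{E:esspolyg}.

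The bound \eqref{E:LBBpolyg} is then immediate. We have $\beta(\Omega)^2=\min\sigma(\SS)\le\min\sigma_{\mathrm{ess}}(\SS)$, and since the left endpoint of the interval attached to $\bc$ is $\frac12-\frac{|\sin\omega_\bc|}{2\omega_\bc}\le\frac12-\frac{\sin\omega_\bc}{2\omega_\bc}$, taking the minimum over corners and a square root gives the stated estimate, while $0<\beta(\Omega)$ is the LBB condition noted at the outset. The main obstacle throughout is the asymmetry of the three special points $0,\tfrac12,1$ — each requiring a different mechanism (positivity, closedness, loss of ellipticity) — and in particular keeping straight that passing to $L^2_\circ(\Omega)$ is precisely what removes $0$ from the essential spectrum.
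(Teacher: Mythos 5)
Your proof is correct and follows essentially the same route as the paper: reduce to Theorem~\ref{L:charpolyg} and determine for which $\sigma$ the characteristic equation \eqref{E:char} has roots on the line $\Re\lambda=0$, using that $\tau\mapsto\sinh(\tau\omega)/(\tau\omega)$ has range $[1,\infty)$. The only cosmetic difference is that the paper includes $\lambda=0$ directly and so obtains the closed interval at once, whereas you recover the endpoints and the midpoint $\tfrac12$ from closedness of the essential spectrum; your explicit bookkeeping of the exceptional values $0,\tfrac12,1$ and of the passage from $L_\sigma$ on $H^1_0(\Omega)^2$ to $\SS$ on $L^2_\circ(\Omega)$ is left implicit in the paper but is sound.
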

\begin{proof}
As $\Omega$ is Lipschitz, $\beta(\Omega)>0$ \cite{HorganPayne1983}. 
For $\sigma_{\mathrm{ess}}(\SS)$, 
we have to find the $\sigma$ for which \eqref{E:char} has solutions $\lambda$ with $\Re\lambda=0$. The function $\lambda\mapsto\frac{\sin\lambda\omega}{\lambda\omega}$ maps $i\R$ to $[1,\infty)$. For $\sigma\ne\frac12$, the necessary and sufficient condition is therefore
\[
 -\frac{\sin\omega}\omega \le 1-2\sigma \le \frac{\sin\omega}\omega\,.
\]
This proves \eqref{E:esspolyg}.
\end{proof}

\begin{remark}
\label{R:curvedpol}
The result of Theorem \ref{T:esspolyg} remains true if $\Omega$ is a curved polygon, that is a bounded Lipschitz domain with a piecewise $\sC^{2}$ boundary. This follows from general perturbation techniques that are part of the Kondrat'ev theory \cite[\S2]{Kondratev67}, see also \cite[Ch.~6]{KozlovMazyaRossmann97b}. If the corner angles have cusps, Theorem \ref{T:esspolyg} extends as follows: An inward cusp or crack ($\omega=2\pi$) does not contribute to the nontrivial essential spectrum \cite{Friedrichs1937}, whereas in the presence of an outward cusp ($\omega=0$), the essential spectrum of the Cosserat problem fills the entire interval $[0,1]$ \cite{Friedrichs1937,Tartar_NS2006}.
The estimate~\eqref{E:LBBpolyg} for $\beta(\Omega)$ was proved by Stoyan in \cite{Stoyan1999}, using the equivalence between the Friedrichs constant and the LBB constant \cite{HorganPayne1983,CoDa_IBAFHP}.
\end{remark}

\section{Three-dimensional domains with edges or corners}\label{S:3D}

Let $\Omega\subset\R^3$. We consider domains with edges and corners. For the case when $\Omega$ contains edges in its boundary, we prove that the essential spectrum of the Cosserat problem contains an interval that depends on the opening angle of the edges. For the case when $\Omega$ has a conical point with an axisymmetric tangent cone, we describe how to reduce the question to the determination of the roots of a sequence of holomorphic functions and we show the result of numerical computations. Polyhedral corners are the subject of some comments at the end of this section.

\subsection{Edges}\label{Ss:Edges}
In a first step, we assume that the boundary of $\Omega$ contains a piece of a straight edge of opening $\omega\in(0,2\pi)$ in the sense that for some $R>0$
\begin{multline}
\label{E:edge}
   \Big\{x\in \Omega \mid \sqrt{x_1^2{+}x_2^2}\leq 2R,\ |x_3|\leq 2R\Big\} \\ =
   \Big\{x\in \R^3 \mid \sqrt{x_1^2{+}x_2^2}\leq 2R,\ |x_3|\leq 2R,\ 
   |\arg(x_1{+}ix_2)|<\frac\omega2\Big\}.
\end{multline}

\begin{theorem}
\label{T:essedge}
 Under the hypothesis \eqref{E:edge}, the interval 
 $[\frac12-\frac{\sin\omega}{2\omega},\frac12+\frac{\sin\omega}{2\omega}]$ is contained in the essential spectrum of the Cosserat problem.
\end{theorem}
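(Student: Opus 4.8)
The plan is to adapt, essentially verbatim, the singular-sequence construction used in the proof of Theorem~\ref{L:charpolyg}, by transplanting the two-dimensional Cosserat singular functions into the three-dimensional wedge and absorbing the extra variable $x_3$ with a second cut-off. Write $\bx=(x_1,x_2,x_3)$, $r=\sqrt{x_1^2+x_2^2}$ and $\theta=\arg(x_1+ix_2)$. By \eqref{E:edge}, in the region $\{r\le 2R,\ |x_3|\le 2R\}$ the domain $\Omega$ coincides with the product of the plane sector of opening $\omega$ with the segment $|x_3|<2R$. Given a two-dimensional singular function $\bw_\lambda$ from \eqref{E:wlam} (a field in the variables $x_1,x_2$ only), I would extend it to the three-component field $\widetilde\bw_\lambda=(\bw_\lambda,0)$, independent of $x_3$. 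Because $\widetilde\bw_\lambda$ has vanishing third component and no $x_3$-dependence, the $\partial_{x_3}^2$ terms in $\Delta$ and the third row of $\grad\div$ annihilate it, so that $L_\sigma\widetilde\bw_\lambda=0$ as soon as $(\sigma\Delta_2-\grad_2\div_2)\bw_\lambda=0$; moreover $\widetilde\bw_\lambda$ vanishes on the faces $\theta=\pm\tfrac\omega2$.

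For a fixed $\sigma$ in the open interval $(\tfrac12-\tfrac{\sin\omega}{2\omega},\tfrac12+\tfrac{\sin\omega}{2\omega})$, the argument in the proof of Theorem~\ref{T:esspolyg} (the map $\lambda\mapsto\frac{\sin\lambda\omega}{\lambda\omega}$ sending $i\R$ onto $[1,\infty)$) furnishes a purely imaginary root $\lambda\in i\R$ of the characteristic equation \eqref{E:char}, together with the sign $\varepsilon$ of \eqref{E:sigma}. I would then copy the parameters of \eqref{E:lamn}, namely $\lambda_n=\lambda+\frac1n$ with the matching $\sigma_n$, keep the radial cut-off $\chi(r)$ of the earlier proof, and add a longitudinal cut-off $\psi\in\sC^\infty(\R)$ equal to $1$ for $|x_3|\le R$ and supported in $|x_3|<2R$. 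Setting $\bu_{\lambda_n}=\chi(r)\,\psi(x_3)\,\widetilde\bw_{\lambda_n}$, this function vanishes on $\partial\Omega$, lies in $H^1_0(\Omega)^3$ for $n$ large (since $\Re\lambda_n=\frac1n>0$), and the same homogeneity count as in two dimensions, now with one extra bounded integration in $x_3$, gives $\|\bu_{\lambda_n}\|_{0,\Omega}$ bounded and $|\bu_{\lambda_n}|_{1,\Omega}\sim\sqrt n\to\infty$.

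It then remains to estimate $\bff_n:=L_{\sigma_n}\bu_{\lambda_n}$ and to conclude through $L_\sigma\bu_{\lambda_n}=(\sigma-\sigma_n)\Delta\bu_{\lambda_n}+\bff_n$ exactly as in \eqref{E:Lsigmau}--\eqref{E:lim0}. Since $L_{\sigma_n}\widetilde\bw_{\lambda_n}=0$, every term of $\bff_n$ carries at least one derivative of $\chi\psi$. The terms involving $\chi'$ or $\chi''$ are supported in $R\le r\le 2R$, where $\widetilde\bw_{\lambda_n}$ is smooth and uniformly bounded, hence bounded in $L^2$; the term $\chi\,\psi''\,\widetilde\bw_{\lambda_n}$ is also bounded in $L^2$ because $\widetilde\bw_{\lambda_n}\sim r^{\lambda_n}$ stays bounded. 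The one dangerous contribution is the third component $\chi\,\psi'\,\div_2\bw_{\lambda_n}$, arising from $\partial_{x_3}\div_3\bu_{\lambda_n}$: the ``pressure'' $\div_2\bw_{\lambda_n}$ is homogeneous of degree $\lambda_n-1$ and blows up like $r^{\frac1n-1}$, so this term is of size $\sqrt n$ in $L^2$, i.e. as large as $|\bu_{\lambda_n}|_{1,\Omega}$ itself.

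The main obstacle, and the point where the edge case genuinely differs from the planar one, is therefore to control this term in the weaker norm $H^{-1}$; the naive $L^2$ bound is fatal. The device is to read it as a two-dimensional divergence: since $\psi'$ and $\chi$ do not depend on the differentiated variables, $\chi\psi'\div_2\bw_{\lambda_n}=\div_2(\chi\psi'\bw_{\lambda_n})-\psi'\,\grad_2\chi\cdot\bw_{\lambda_n}$, where the field $\chi\psi'\bw_{\lambda_n}\sim r^{\lambda_n}$ is bounded in $L^2(\Omega)$ uniformly in $n$. As the $H^{-1}$ norm of a divergence is bounded by the $L^2$ norm of its argument, this term is $O(1)$ in $H^{-1}$, while the remaining piece is supported in $r\ge R$ and bounded in $L^2$. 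Hence $|\bff_n|_{-1,\Omega}=O(1)$, whereas $|\sigma-\sigma_n|\,|\bu_{\lambda_n}|_{1,\Omega}=O(\tfrac1n)\,O(\sqrt n)=O(n^{-1/2})$, so that the ratio $|L_\sigma\bu_{\lambda_n}|_{-1,\Omega}/|\bu_{\lambda_n}|_{1,\Omega}\to0$ as in \eqref{E:lim0}. This singular sequence rules out any a priori estimate for $L_\sigma$, so $L_\sigma$ is not Fredholm and $\sigma$ lies in the essential spectrum. Finally, since the set of non-Fredholm values is closed, the open interval may be replaced by its closure, and its midpoint $\sigma=\tfrac12$ belongs to the essential spectrum by the general discussion following \eqref{E:Ls}; this yields the full closed interval.
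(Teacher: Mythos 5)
Your proof is correct and follows essentially the same route as the paper: the same tensorized cut-off $\chi(x_1,x_2)\,\theta(x_3)$ applied to the planar singular function $(\bw_{\lambda_n},0)^\top$, the same parameters $\lambda_n,\sigma_n$ from \eqref{E:lamn}, and the same identification of the third component of the residual as the only term requiring an $H^{-1}$ rather than $L^2$ estimate. You in fact supply the detail the paper leaves implicit, namely that $f_{n,3}=-\theta'(x_3)\div(\chi\bw_{\lambda_n})$ is a two-dimensional divergence of a field bounded in $L^2$ uniformly in $n$, hence bounded in $H^{-1}$.
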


\begin{proof}
Let $\sigma\in[\frac12-\frac{\sin\omega}{2\omega},\frac12+\frac{\sin\omega}{2\omega}]\setminus\{\frac12\}$. We construct a singular sequence based on the one that we have used for a two-dimensional corner in Section~\ref{Ss:SingSeq}: Like there, we set $z=x_{1}+ix_{2}$, and for $\lambda\in\C$ we define  $\bw_{\lambda}=(w_{\lambda ,1},w_{\lambda ,2})^\top$  as in equation \eqref{E:wlam} and use the same cut-off function $\chi$. 
Now we choose another cut-off function $\theta \in \sC^2(\R)$ that satisfies 
$\theta (x_3)=1$ if $|x_3|\leq R$ and $\theta (x_3)=0$ if $|x_3|\geq  2R$.
Then we define $\bu_{\lambda}$ as
\begin{equation*}
\bu_\lambda(x)=\theta (x_3)\,\chi(x_1,x_2)
\begin{pmatrix}
 w_{\lambda ,1}(z)\\ w_{\lambda ,2}(z)\\ 0
\end{pmatrix}.
\end{equation*}
For $\lambda$, $\lambda_{n}$, $\sigma_{n}$ chosen as before, see \eqref{E:lamn}, the vector functions $\bu_{\lambda _n}$ belong to $H^{1}_{0}(\Omega)^{3}$ and $L_{\sigma}\bu_{\lambda _n}$ has still the expression \eqref{E:Lsigmau} with $\bff_n = (f_{n,1},f_{n,2},f_{n,3})^\top$ given by
\begin{align*}
 \begin{pmatrix}
   f_{n ,1}\\f_{n,2}
 \end{pmatrix} &=
   \theta(x_3)\, L^{2\mathrm{D}}_{\sigma_{n}}(\chi \bw_{\lambda_n}) 
   + \sigma_n\theta ''(x_3)\,\chi \bw_{\lambda_n}\\
 f_{n,3} &= -\,\theta'(x_3)\div (\chi\bw_{\lambda_n}) \,.
\end{align*}
We conclude as before that $\|\bu_{\lambda_n}\|_{0,\Omega}$ and 
$\|\binom{f_{n ,1}}{f_{n ,2}}\|_{0,\Omega}$ remain bounded as $n\to\infty$ whereas $|\bu_{\lambda_n}|_{1,\Omega}\to\infty$.
 As for $f_{n,3}$, we see that $|f_{n,3}|_{-1,\Omega}$ remains bounded, so that we can again conclude that $\bu_{\lambda_n}$ is a singular sequence satisfying \eqref{E:lim0},
and that $\sigma$ therefore belongs to the essential spectrum.
\end{proof}

\begin{remark}
\label{R:edge} 
Yet another contribution to the essential spectrum may come from the edges, this time not from the essential spectrum of the transversal Cosserat problem $L^{2\mathrm{D}}_{\sigma}$ as described in Theorem~\ref{T:essedge}, but from \emph{eigenvalues} of the ``edge symbol'', which is the  boundary value problem defined on the sector $\Gamma= \{(x_1,x_2)\in \R^2 \mid 
|\arg(x_1{+}ix_2)|<\frac\omega2\}$, cf.\ \eqref{E:edge},
\begin{equation}
\label{E:edgesymb}
   \R\ni\xi \mapsto L_\sigma(\xi) = \sigma(\partial_1^2+\partial_2^2-\xi^2)\mathbb{I}_3 -
   \begin{pmatrix}
   \partial^2_1 & \partial_1\partial_2 & i\xi\partial_1 \\
   \partial_1\partial_2 & \partial^2_2 & i\xi\partial_2 \\
   i\xi\partial_1 & i\xi\partial_2 & -\xi^2 \\
   \end{pmatrix} : H^1_0(\Gamma)^3 \to H^{-1}(\Gamma)^3,
\end{equation}
obtained by partial Fourier transformation along the edge. Indeed, it is known that for an elliptic boundary value problem on a domain with edges to be Fredholm, it is not sufficient that its edge symbol is Fredholm (this latter condition is satisfied as soon as the transversal Mellin symbol is invertible on a certain line $\Re\lambda=\mathrm{const}$), but it has to be \emph{invertible} for all $\xi\neq0$ \cite[Theorem~10.1]{MazyaPlamenevskii80b}, see also \cite{MazyaRossmann10}. For our Cosserat problem, if such eigenvalues exist below the essential spectrum of the transversal problem $L^{2\mathrm{D}}_{\sigma}$, then the lower bound of the essential spectrum on $\Omega$, and therefore the LBB constant, may be smaller than what is described by Theorem~\ref{T:essedge}. Whether this actually happens is unknown, and some numerical experiments we did rather seem to indicate that it does not.
\end{remark}
\Bk

\begin{remark}
\label{R:curvededge}
The result of Theorem \ref{T:essedge} remains true for some curved edges, too, similar to what we mentioned in two dimensions in Remark~\ref{R:curvedpol}. In particular, if $\Omega$ is a finite straight cylinder with a smooth base, then the interval corresponding to the angle $\omega=\frac\pi2$, namely $[\frac12-\frac1\pi, \frac12+\frac1\pi]$, will belong to the essential spectrum. 
\end{remark}

\subsection{Conical points}\label{Ss:Cones}
We assume now that $\Omega$ is a domain in $\R^3$ with conical points. Theorem \ref{T:Mellin}  applies with the critical abscissa $\Re\lambda=-\frac12$. We can prove exactly like in dimension 2 that the sufficient condition for Fredholmness is also necessary. Now the question is: Does a 3D cone produce a full interval of nonzero length inside the essential spectrum? Do we have symmetry with respect to $\sigma=\frac12$ like in 2D? We do not have a general answer to the first question, but a partial answer in the case of axisymmetric cones. This example shows that the symmetry property with respect to $\sigma=\frac12$ is lost.

We learn from \cite{KozlovMazyaSchwab91,KozlovMazyaSchwab92} that semi-analytical calculation of the spectrum of the Mellin symbol $\gA_\sigma$ associated with an axisymmetric cone $\Gamma$ is made possible by the use of the Boussinesq representation and separation of variables in spherical coordinates. Then, relying on the equivalence \eqref{E:equiv}, we can follow the same track as in 2D, looking for homogeneous solutions $\bw$ of the equation
\begin{equation}
\label{E:homo}
   (\sigma\Delta - \nabla\div)\bw=0
\end{equation}
without boundary conditions in a first step, and imposing the homogeneous Dirichlet conditions in a second step.

We give a rapid overview of this procedure (see \cite[\S3.7]{KozlovMazyaRossmann01} for details). The Boussinesq representation for solutions of the equation \eqref{E:homo} states that $\bw$ can be found as a linear combination of the three following particular solutions of the same equation
\begin{equation}
\label{E:Boussinesq}
   \bw_1 = \nabla \Psi,\quad
   \bw_2 = \curl(\Theta\,\vec e_3),\quad
   \bw_3 = \nabla (x_3 \Lambda) + 2(\sigma-1)\Lambda\vec e_3,
\end{equation}
where the scalar functions $\Psi$, $\Theta$, and $\Lambda$ are harmonic functions. Here we choose $\vec e_3$ as the director of the cone axis. Then homogeneous $\bw$'s of degree $\lambda$ are given by finding $\Psi$ and $\Theta$ homogeneous of degree $\lambda+1$, and $\Lambda$ of degree $\lambda$. Using spherical coordinates $(r,\theta,\varphi)$ such that
\[
   x_1 = r \sin\theta \cos\varphi,\quad	
   x_2 = r \sin\theta \sin\varphi,\quad	
   x_3 = r \cos\theta
\]
we split the 3D problem into an infinite sequence of 2D problems in $(r,\theta)$ parametrized by the azimuthal frequency $m\in\Z$. The harmonic generating function can be written in separated variables as 
\begin{equation}
\label{E:harm}
   \begin{gathered}
   \Psi = r^{\lambda+1} P^{-m}_{\lambda+1}(\cos\theta) \cos(m\varphi),\quad
   \Theta = r^{\lambda+1} P^{-m}_{\lambda+1}(\cos\theta) \sin(m\varphi),\\
   \mbox{and}\quad \Lambda = r^{\lambda} P^{-m}_{\lambda}(\cos\theta) \sin(m\varphi),
\end{gathered}
\end{equation}
where $P^\mu_\nu$ is the associated Legendre function of the first kind of order $\mu$ and degree $\nu$. Combining \eqref{E:Boussinesq} and \eqref{E:harm}, we find for each $m\in\Z$ three independent solutions $\bw^{m,1}_\lambda$, $\bw^{m,2}_\lambda$, and $\bw^{m,3}_\lambda$ of degree $\lambda$ for equation \eqref{E:homo}. 

\begin{figure}
\centerline{\includegraphics[scale=0.41]{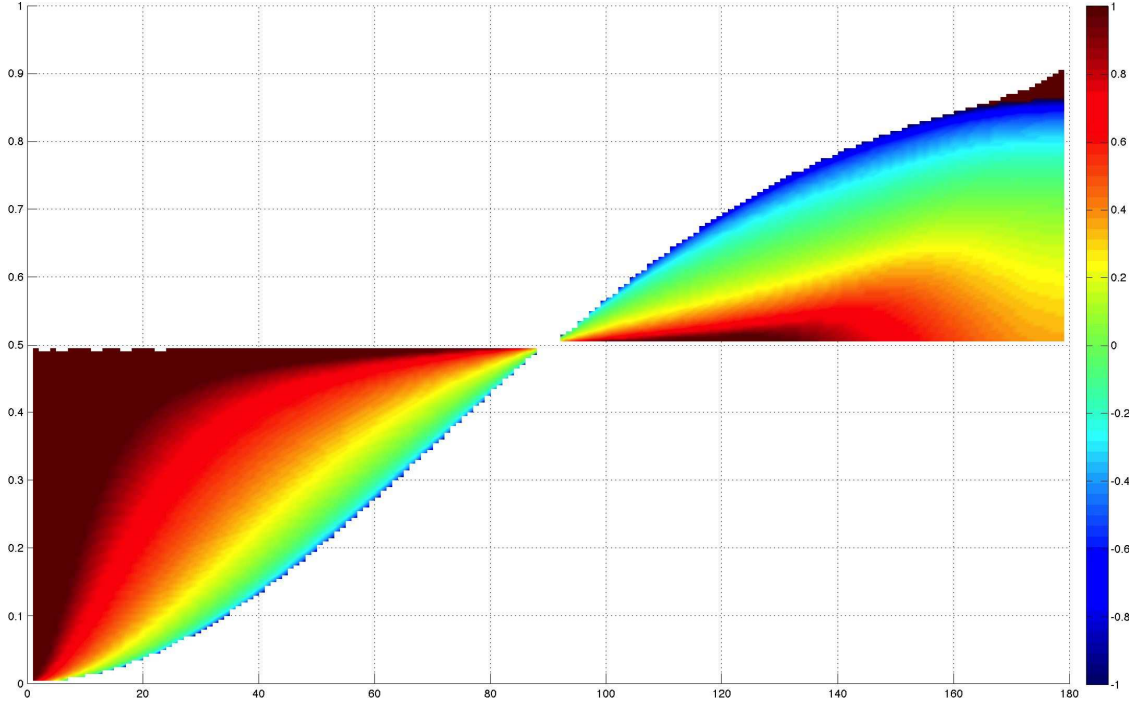}}
\caption{$m=0$: Color plot of the decimal logarithm of  $\Im\lambda$ of roots $\lambda$ with real part $-\frac12$ of characteristic equation $\gM^0_\sigma(\lambda)=0$ as a function of the opening $\omega$ (in degrees, abscissa) and the parameter $\sigma\in[0,1]$ (ordinate).}
\label{F4.1}
\end{figure}

Denote by $\omega$ the opening of the cone $\Gamma$, which means that $\Gamma$ is defined by the condition $\theta\in[0,\omega)$ in spherical coordinates. The exponents $\lambda$ we are looking for are those for which there exists an integer $m$ such that the following $3\times3$ matrix is singular for all $r>0$ and $\varphi$
\[
   \begin{pmatrix}
   w^{m,1}_{\lambda,1}(r,\omega,\varphi) & 
   w^{m,2}_{\lambda,1}(r,\omega,\varphi) & 
   w^{m,3}_{\lambda,1}(r,\omega,\varphi) \\
   w^{m,1}_{\lambda,2}(r,\omega,\varphi) & 
   w^{m,2}_{\lambda,2}(r,\omega,\varphi) & 
   w^{m,3}_{\lambda,2}(r,\omega,\varphi) \\
   w^{m,1}_{\lambda,3}(r,\omega,\varphi) & 
   w^{m,2}_{\lambda,3}(r,\omega,\varphi) & 
   w^{m,3}_{\lambda,3}(r,\omega,\varphi) 
   \end{pmatrix}
\]
Using the special forms \eqref{E:Boussinesq} and \eqref{E:harm}, the variables $r$ and $\varphi$ disappear and after some obvious simplification we are left with the matrix\footnote{The matrix \eqref{E:mat3D} that we reproduce here is equivalent to the one we find in the preprint \cite{KozlovMazyaSchwab91}. Its correctness can be checked. Unfortunately a misprint appeared in further references \cite{KozlovMazyaSchwab92,KozlovMazyaRossmann01}: The factor $-m$ in $(\lambda+1-m)$ is missing in the last term of the second column. Nevertheless, the numerical computations presented in these latter references where made with the correct formulas.}

\begin{figure}
\centerline{\includegraphics[scale=0.41]{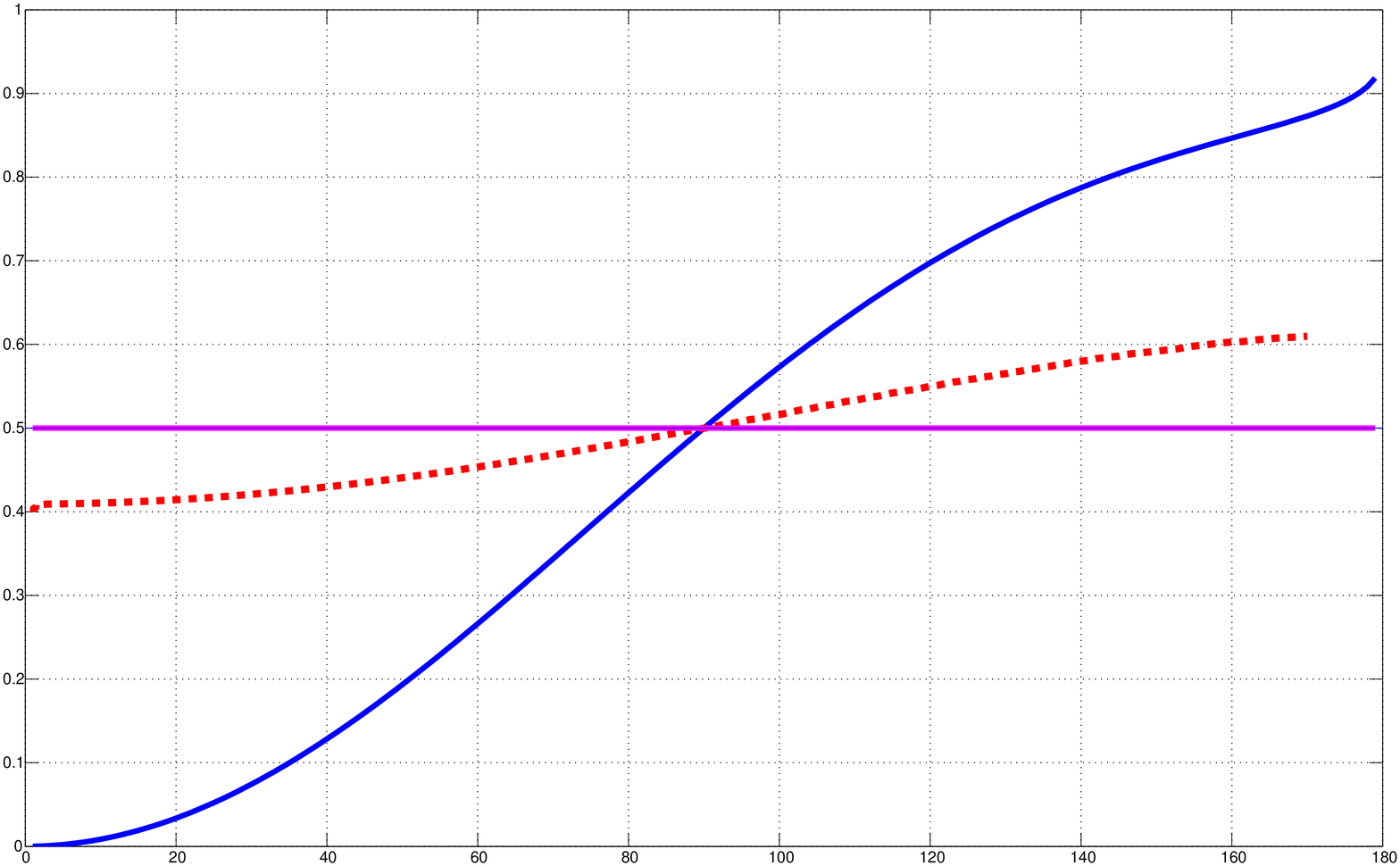}}
\caption{$m=0,1$: Boundary of regions $\gR^0$ (solid line) and $\gR^1$ (dashed line) in the plane $(\omega,\sigma)$: Opening $\omega$ (in degrees, abscissa) and $\sigma\in[0,1]$ (ordinate).}
\label{F4.2}
\end{figure}

{\small
\begin{equation}
\label{E:mat3D}
   \begin{pmatrix}
   (\lambda+1)P^{-m}_{\lambda+1}(\cos\omega) & 
   m P^{-m}_{\lambda+1}(\cos\omega) &
   (\lambda+2\sigma-1) \cos\omega \,P^{-m}_{\lambda}(\cos\omega) 
   \\[2ex]
   \begin{minipage}{0.3\textwidth}
   $(\lambda+1) \cos\omega P^{-m}_{\lambda+1}(\cos\omega)$ \\
   $-  (\lambda+1-m)P^{-m}_{\lambda}(\cos\omega)$
   \end{minipage} & 
   m\cos\omega P^{-m}_{\lambda+1}(\cos\omega) & 
   \begin{minipage}{0.3\textwidth}
   $(\lambda+1+m)\cos\omega \,P^{-m}_{\lambda+1}(\cos\omega)$\\
   $+(1-2\sigma)\sin^2\omega \,P^{-m}_{\lambda}(\cos\omega)$ \\
   $- (\lambda+1)\cos^2\omega \,P^{-m}_{\lambda}(\cos\omega)$
   \end{minipage} 
   \\[2ex]
   -m P^{-m}_{\lambda+1}(\cos\omega) &
   \begin{minipage}{0.3\textwidth}
   $(\lambda+1-m)\cos\omega \,P^{-m}_{\lambda}(\cos\omega)$ \\
   $-(\lambda+1)\,P^{-m}_{\lambda+1}(\cos\omega)$
   \end{minipage} &
   -m\cos\omega\,P^{-m}_{\lambda}(\cos\omega)
   \end{pmatrix}
\end{equation}
}

With this matrix at hand, it is possible to compute its determinant $\gM^m_\sigma(\lambda)$ and find, for any chosen $m$, couples $(\omega,\sigma)$ for which the equation $\gM^m_\sigma(\lambda)=0$ has roots on the line $\Re\lambda=-\frac12$. We present in Fig.~\ref{F4.1} the region $\gR^0$ of the $(\omega,\sigma)$ plane where such roots can be found when $m=0$. This region is clearly non-symmetric with respect to $\sigma=\frac12$.

The region $\gR^1$ that we have calculated for $m=1$ (which is the same for $m=-1$) is strictly contained in the region $\gR^0$ associated with $m=0$, see Fig.~\ref{F4.2}. We observe that when $|m|$ is increasing, the region $\gR^m$ is shrinking.

\subsection{Polyhedra}\label{R:polyhedra}
By polyhedron we understand a bounded Lipschitz domain $\Omega\subset\R^{3}$ the boundary of which consists of a finite set of plane polygons, the faces. This set being chosen in a minimal way, the edges $\be$ of $\Omega$ are the segments which form the boundaries of the faces, and the corners $\bc$ are the corners of the faces. Let $\gE$ and $\gC$ be the sets of edges and corners, respectively. To each edge $\be$ is associated a transversal sector $\Gamma_\be$ and its opening $\omega_\be$. To each corner $\bc$ is associated the tangent infinite polyhedral cone $\Gamma_\bc$ and its section $G_\bc=\Gamma_\bc\cap\Sbb^2$.

Let $\sigma\in\C$ be the Cosserat spectral parameter. For each edge $\be$, the edge symbol  $L^\be_\sigma(\xi)$ is defined by \eqref{E:edgesymb}. For each corner $\bc$, the corner Mellin symbol $\gA^\bc_\sigma(\lambda)$ is defined by \eqref{E:MellinS}. If $\sigma\not\in\{0,\frac12,1\}$, and if the following two conditions are satisfied
\begin{gather}
\label{E:edgeinv}
   \forall\be\in\gE,\quad L^\be_\sigma(\xi) \ \ \mbox{is invertible for all}\ \  \xi\neq0 \\
\label{E:cornerinv}
   \forall\bc\in\gC,\quad \gA^\bc_\sigma(\lambda) \ \ \mbox{is invertible for all}\ \  \lambda,\,
   \Re\lambda=-\tfrac12.
\end{gather}
then $L_\sigma$ is Fredholm on $\Omega$. These conditions are also necessary and determine the essential spectrum of the operator $\SS$ \eqref{E:SS}. We recall that, cf.~Remark \ref{R:edge}, the condition
\[
   \sigma\not\in\Big[\frac12 - \frac{\sin\omega_{\be}}{2\omega_{\be}}, \frac12 + \frac{\sin\omega_{\be}}{2\omega_{\be}}\Big]
\]
is necessary for \eqref{E:edgeinv} to hold but is, possibly, not sufficient.

\section{Finite element computations for rectangles and cuboids}\label{S:Rectangles}

\subsection{Rectangles}
For any rectangle $\Omega\subset\R^{2}$, Theorem~\ref{T:esspolyg} shows that the essential spectrum of the Cosserat problem is the interval $[\frac12-\frac1\pi, \frac12+\frac1\pi]$. Therefore the Cosserat constant of $\Omega$ satisfies 
\begin{equation}
\label{E:sigmapi2}
 \sigma(\Omega) \le \frac12-\frac1\pi \sim 0.18169 \,.
\end{equation}
In contrast to the essential spectrum, the discrete spectrum depends on the actual shape of the rectangle, namely of its aspect ratio (but not on its width and height separately). Our convention is to characterize a rectangle by the number $a\in(0,1]$ such that its aspect ratio is $1\ee$:$\ee a$ or $a\ee$:$\ee1$. The square has its factor $a$ equal to $1$, and small $a$ corresponds to elongated rectangles (large aspect ratio).

To determine $\sigma(\Omega)$ and therefore the LBB constant, one can solve the Cosserat eigenvalue problem numerically and look for the smallest eigenvalue. A conforming discretization based on the Stokes eigenvalue problem~\eqref{E:Stokesp} consists in choosing a pair of finite dimensional spaces $\gU\subset H^1_0(\Omega)$ and $\gP\subset L^2(\Omega)$ and constructing the following discrete version of the Schur complement
\begin{equation}
\label{E:Schur}
   \SS(\gU,\gP) = B_1 R^{-1} B_1^\top + B_2 R^{-1} B_2^\top 
\end{equation}
where $R$ is the stiffness matrix associated with the $\nabla:\nabla$ bilinear form on $\gU\times \gU$ and $B_k$ is associated with the bilinear form $(u,p)\mapsto\int_\Omega \partial_k u\,p$ on $\gU\times\gP$, $k=1,2$. The discrete Cosserat eigenvalues $\tilde\sigma_j$, $j\ge1$, are the non-zero eigenvalues of $\SS(\gU,\gP)$ ordered increasingly. Two main difficulties are encountered:
\begin{enumerate}
\item[{\em (i)}] The discrete pair $(\gU,\gP)$ constructed from finite element spaces may have a behavior of its own, somewhat independent of the continuous pair $(H^1_0(\Omega),L^2(\Omega))$. This may give rise to spurious eigenvalues.
\item[{\em (ii)}] The regularity of the eigenvectors $p$ of $\SS$ depends on the eigenvalue $\sigma$ and gets worse as $\sigma$ is closer to the essential spectrum, cf Remark \ref{R:sings}. In  Figure \ref{F5.0} we plot the supremum of exponents $s$ such that $p$ belongs to $H^s(\Omega)$: This is the minimal $\Re\lambda$ for $\lambda$ solution of  $(1-2\sigma)\sin\frac{\lambda\pi}2 = \pm\lambda$ with positive real part, cf \eqref{E:reg}.
\end{enumerate}

\begin{figure}
\includegraphics[scale=0.45]{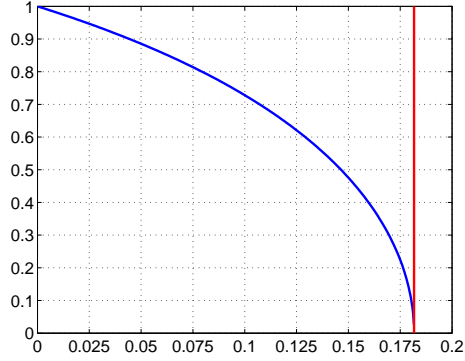}
\caption{Regularity exponent $s$ (ordinate) function of $\sigma$ (abscissa) for rectangles. The vertical asymptote is $\sigma=\frac12-\frac1\pi$.}
\label{F5.0}
\end{figure}

There exist estimates for $\sigma(\Omega)$ from above and from below which prove that $\sigma(\Omega)\to0$ like $\OO(a^2)$ as $a\to0$: 
\begin{equation}
\label{E:bounds}
   \sin^2\left(\tfrac12{\arctan a}\right) \ \le\ \sigma(\Omega) \ 
   \le\ 1-\frac{\sinh\rho}{\rho\cosh\rho},\quad \mbox{with}\quad \rho=\frac{a\pi}{2}.
\end{equation}
The lower bound is deduced from the Horgan-Payne estimate \cite{HorganPayne1983,CoDa_IBAFHP} and is equal to $\frac{a^2}{4}$ modulo $\OO(a^4)$. The upper bound is obtained by plugging the quasimode $p(x_1,x_2)=\cos x_1$ on the rectangle $(0,\pi)\times(-\rho,\rho)$ into the Rayleigh quotient of the Schur complement $\SS$ (see Lemma \ref{L:channel} below for a more general estimate of this type). It is an improvement at the order $\OO(a^4)$ of the upper bound $\frac{\pi^2a^2}{12}$ proven in \cite{ChizhOlsh00} by Chizhonkov and Olshanskii. Note that the upper bound in \eqref{E:bounds} proves that for any $a\le0.53127$, the bottom $\sigma(\Omega)$ of the spectrum of $\SS$ is an eigenvalue.

\renewcommand{\arraystretch}{1.25}
\begin{table}
\begin{tabular}{| c | c | c | c | }
\hline 
& Extrapolated value & Regularity exponent $s$ & Convergence rate ($n=5,6,7$) \\
\hline 
$\tilde\sigma_1$ & 0.031375609 & 0.93189 & 1.85618\\
$\tilde\sigma_2$ & 0.109538571 & 0.69036 & 1.37814 \\
\hline 
\end{tabular}
\medskip
\caption{Convergence rates for Cosserat eigenvalues on the rectangle $a=0.2$}
\label{T1}
\end{table}

The eigenfunctions are not very singular at the corners if the first eigenvalue is well below the minimum \eqref{E:sigmapi2} of the essential spectrum (Figure \ref{F5.0}), which is the situation for rectangles of large aspect ratio according to \eqref{E:bounds}. We have quantified this by evaluating convergence rates of the first and second eigenvalues when $a=0.2$ (aspect ratio 5\ee:\ee1), using uniform square meshes with $5\cdot2^n\cdot2^n$ elements ($n=1,\ldots,7$) and $\Q_2$-$\Q_1$ polynomial spaces, see Table \ref{T1} where a convergence rate equal to twice the regularity exponent is observed.
However, as the aspect ratio approaches $1$, the first Cosserat eigenvalue approaches the essential spectrum and the numerical results become less reliable.

We present in Figure~\ref{F5.1} computations done with the finite element library {\sl M\'elina++} with different choices of quadrilateral meshes (uniform 12$\times$12 for (a) and (c), strongly geometrically refined at corners with 144 elements for (b) and (d)) and different choices of polynomial degrees for $\bu$ and $p$ (tensor spaces of degree 8 and 6 for (a) and (b), 8 and 7 for (c) and (d)). Note that the meshes follow the elongation of the rectangles.

From these four discretizations, we observe a relative stability of the results below $\frac12-\frac1\pi$, if we except a quadruple eigenvalue that appears for the degrees (8,7) and is rather insensitive to the mesh and the aspect ratio. Moreover, with the same restrictions, the first eigenvalue sits between the explicit lower and upper bounds \eqref{E:bounds}, and further eigenvalues satisfy the estimate of \cite[Theorem 5]{Dobrowolski2005}. In contrast, the part of the numerical eigenvalues appearing above $\frac12-\frac1\pi$ is very sensitive to the mesh and flattens in a very spectacular way when a refined mesh is used.

As one can see from the graphs in Figure~\ref{F5.1}, at an aspect ratio of about 1\ee:\ee0.6 (golden ratio~?), the lowest computed eigenvalue crosses over into the interval occupied by the essential spectrum. This is observed rather stably in similar computations, and if true, it would mean that for all rectangles of smaller aspect ratio, in particular for the square, one would have the LBB constant corresponding to \eqref{E:sigmapi2}, as discussed in the introduction, see \eqref{E:betamaj}.
But due to the large numerical errors arising from spurious numerical eigenvalues and from the strong corner singularities near the essential spectrum of the continuous operator, the numerical evidence is not as convincing as one would wish.

\begin{figure}
\begin{tabular}{ccc}
\includegraphics[scale=0.45]{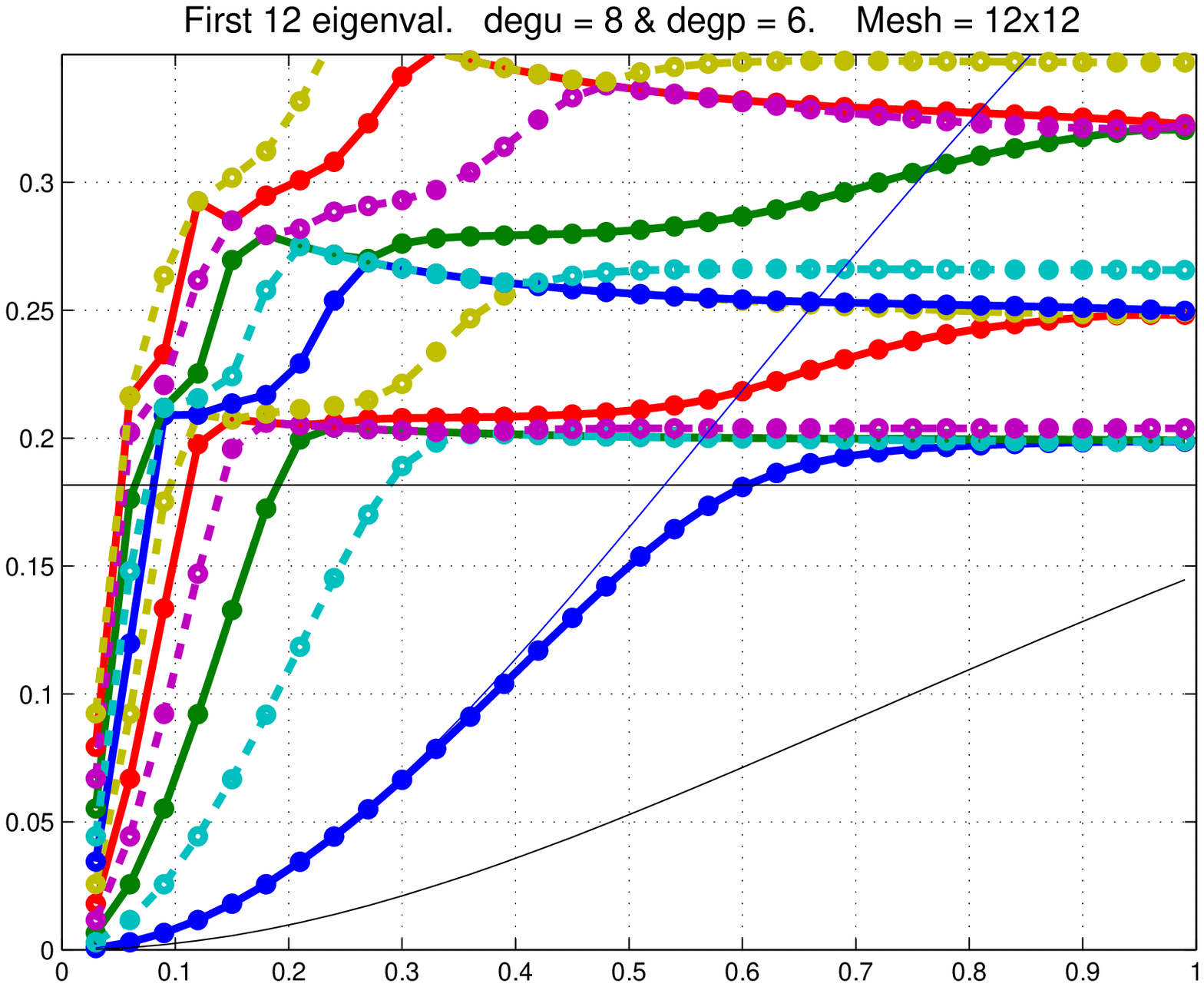} &
\includegraphics[scale=0.45]{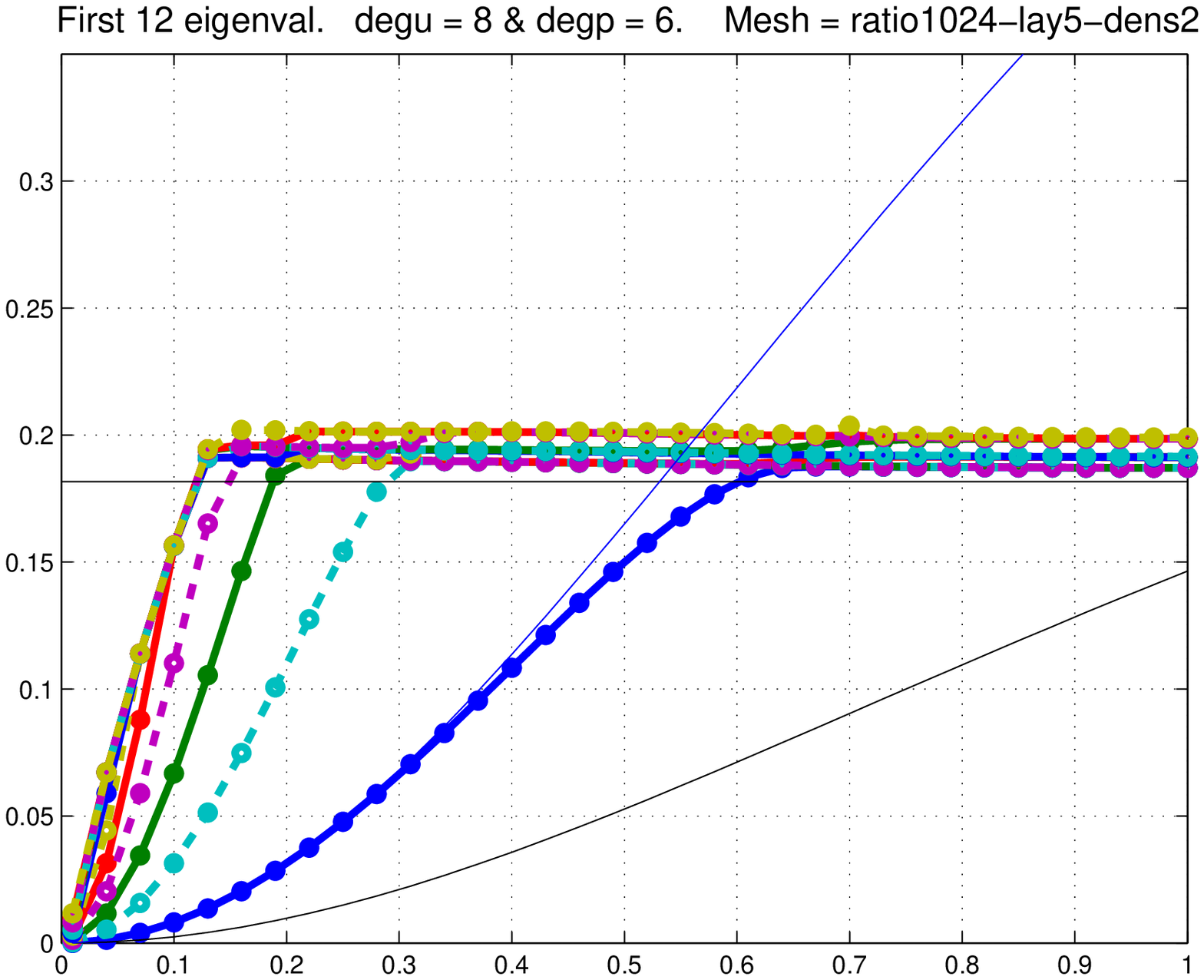} \\
(a) Uniform mesh, $\Q_8$ for $\bu$ and $\Q_6$ for $p$ & 
(b) Refined mesh, $\Q_8$ for $\bu$ and $\Q_6$ for $p$  \\
\includegraphics[scale=0.45]{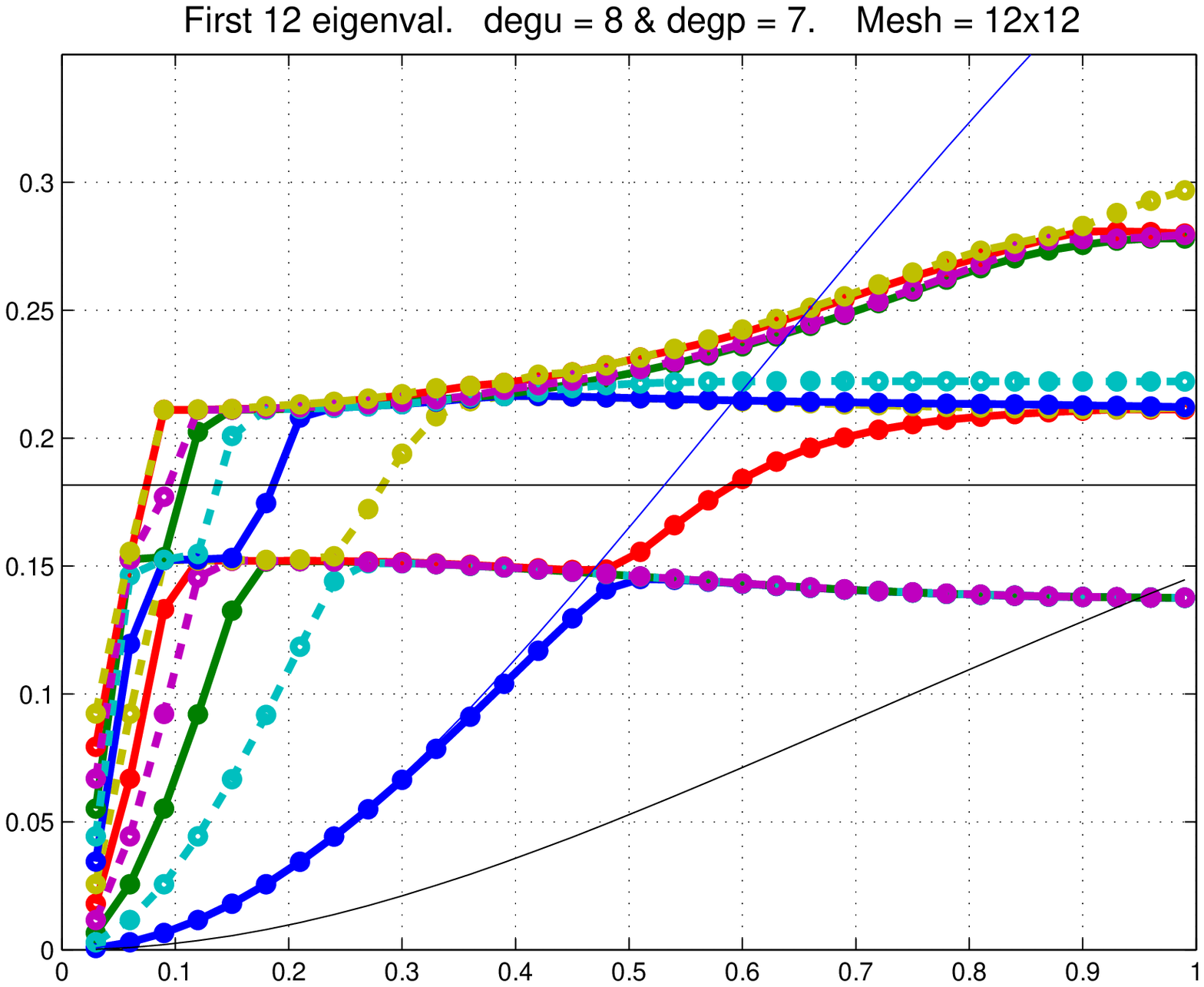} &
\includegraphics[scale=0.45]{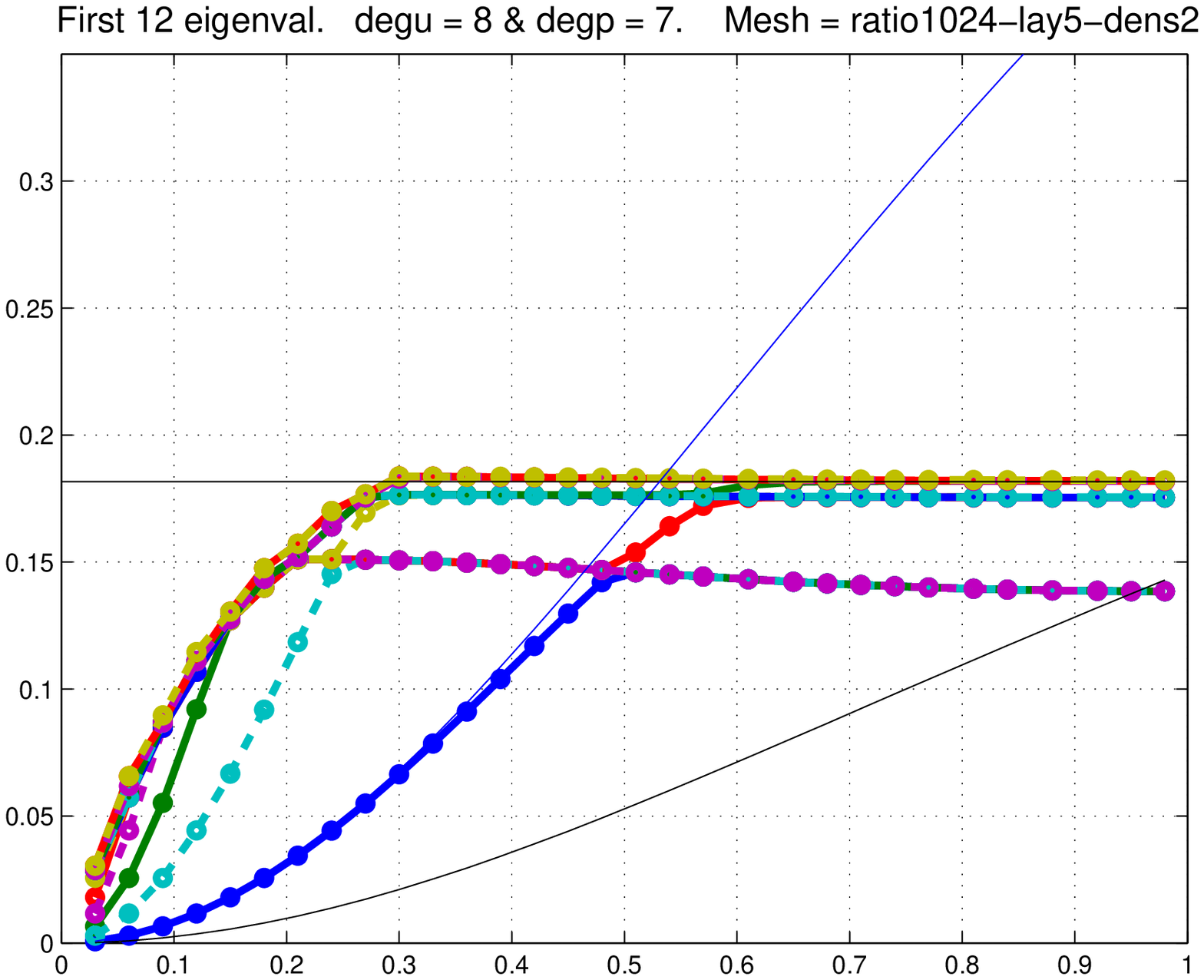} \\
(c) Uniform mesh, $\Q_8$ for $\bu$ and $\Q_7$ for $p$ & 
(d) Refined mesh, $\Q_8$ for $\bu$ and $\Q_7$ for $p$ 
\end{tabular}
\caption{First 12 computed Cosserat eigenvalues on rectangles vs parameter $a\in(0,1]$ (abscissa). The solid horizontal line is the infimum $\frac12-\frac1\pi$ of the essential spectrum. Solid curves are the lower and upper bounds \eqref{E:bounds}.}
\label{F5.1}
\end{figure}

\begin{figure}
\begin{tabular}{ccc}
\includegraphics[scale=0.45]{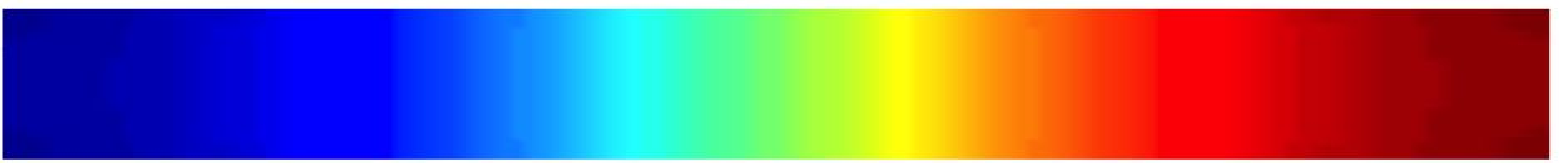} &
\includegraphics[scale=0.45]{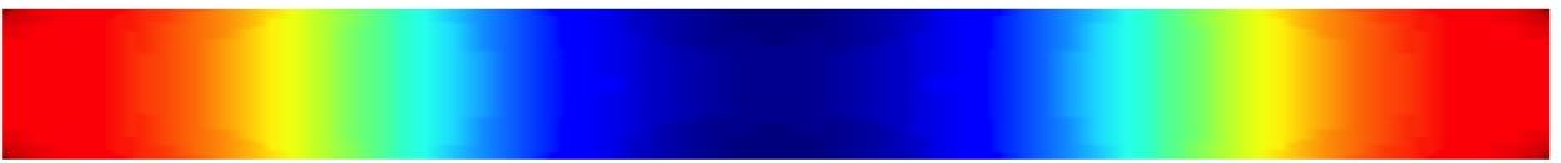} \\
$\sigma_1\simeq 0.008129$ & $\sigma_2\simeq 0.031410$ \\
\includegraphics[scale=0.45]{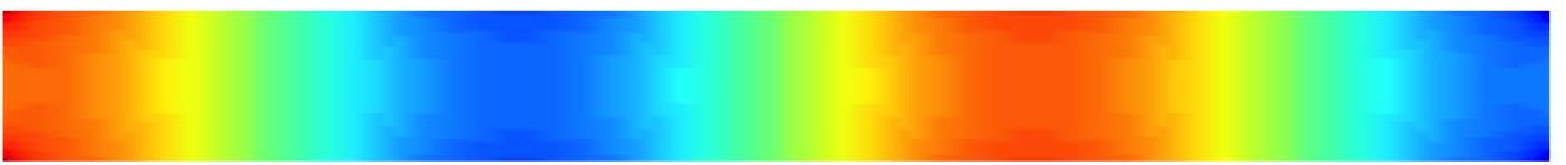} &
\includegraphics[scale=0.45]{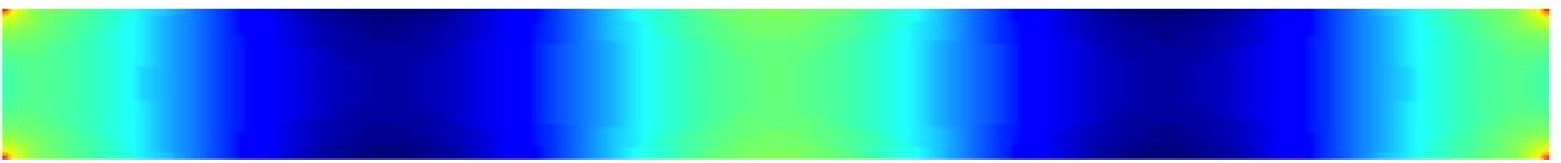} \\
$\sigma_3\simeq 0.066825$ & $\sigma_4\simeq 0.110173$ \\
\includegraphics[scale=0.45]{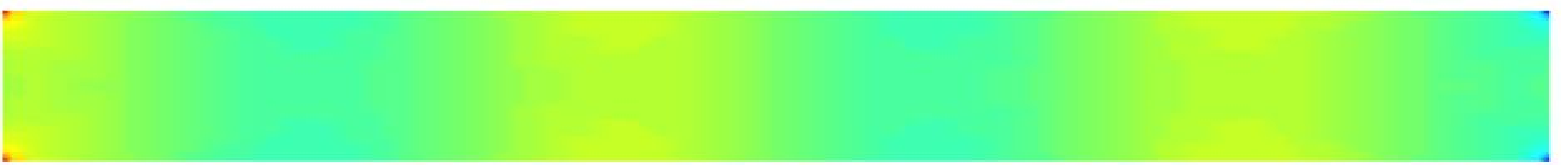} &
\includegraphics[scale=0.45]{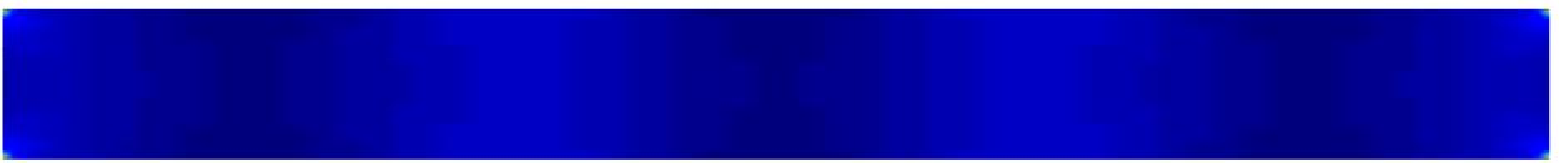} \\
$\sigma_5\simeq 0.156691$ & $\sigma_6\simeq 0.199097$ \\
\end{tabular}
\caption{First six computed Cosserat eigenvectors on the rectangle with aspect ratio $a=0.1$. Same mesh and polynomial degrees as in Fig.~\ref{F5.1} (a).}
\label{F5.2}
\end{figure}

In Figure~\ref{F5.2}, we show the first 6 computed eigenfunctions for an aspect ratio of 10\ee:\ee1. One can see that the first eigenfunctions are almost independent of the transversal variable and look like the corresponding quasimodes $\cos x_1$, $\cos 2x_1$, $\cos 3x_1$,\ldots, although on close inspection, even the first one shows corner singularities. As the eigenvalue grows, the unboundedness at the corners becomes more pronounced, and when the essential spectrum is attained, the corner singularities completely dominate the behavior of the eigenfunction. Thus we see the behavior that was discussed above in Remark \ref{R:sings}.


\subsection{Cuboids}
We show results of computations for domains $\Omega\subset\R^{3}$, along the same lines as in two dimensions. We choose scalar finite dimensional spaces $\gU\subset H^1_0(\Omega)$ and $\gP\subset L^2(\Omega)$. The 3D discrete version of the Schur complement is
\begin{equation}
   \SS(\gU,\gP) = B_1 R^{-1} B_1^\top + B_2 R^{-1} B_2^\top + B_3 R^{-1} B_3^\top 
\end{equation}
where $R$ is still  the stiffness matrix associated with the $\nabla:\nabla$ bilinear form on $\gU\times \gU$ and $B_k$ is the matrix associated with the bilinear form $(u,p)\mapsto\int_\Omega \partial_k u\,p$ on $\gU\times\gP$, now for $k=1,2,3$.

We present computations on elongated cuboids of the form $\frac1a\times$1$\times$1 with $a$ ranging from $0.05$ to $1$, see Figure \ref{F5.3}, compare with \cite{Dobrowolski2005}. From \cite{Dobrowolski2003} we know that for such a ``channel domain'', an upper bound is valid: $\sigma(\Omega)\le \gamma a^2$ where the constant $\gamma$ depends on the cross section of the channel. Here we prove an improvement of this estimate.

\begin{lemma}
\label{L:channel}
Let $\omega$ be a Lipschitz domain in $\R^{d-1}$ ($d\ge2$) and for $a>0$ set $\Omega_a = (0,\frac{\pi}{a})\times\omega$. Denote by $\Delta'$ the Laplacian in $\omega$ and consider the solution $\psi_a$ of the Dirichlet problem
\begin{equation}
\label{E:Dira}
   \psi_a\in H^1_0(\omega),\quad (-\Delta'+a^2)\psi_a = 1\ \ \mbox{in}\ \ \omega.
\end{equation}
Then, with $\mu(\omega)$ the measure of $\omega$, there holds
\begin{equation}
\label{E:channel}
   \sigma(\Omega_a) \le a^2 \frac{\langle\psi_a,1\rangle_\omega}{\mu(\omega)}\,.
\end{equation}
\end{lemma}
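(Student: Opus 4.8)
**

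The plan is to bound the Cosserat constant $\sigma(\Omega_a)$ from above by exhibiting an explicit quasimode and plugging it into the Rayleigh quotient that characterizes $\sigma(\Omega_a)$ as the minimum of the spectrum of the Schur complement operator $\SS$. Recall from \eqref{E:LBB-Coss} and the definition of $\sigma(\Omega)$ that
\[
   \sigma(\Omega_a) = \inff_{q\ee\in\ee L^2_\circ(\Omega_a)}
   \frac{\big\langle \SS q, q \big\rangle_{\Omega_a}}{\DNormc{q}{0,\Omega_a}}
   = \inff_{q\ee\in\ee L^2_\circ(\Omega_a)}
   \frac{\Normc{\nabla q}{-1,\Omega_a}}{\DNormc{q}{0,\Omega_a}}\,,
\]
so that any admissible test function $q$ with mean zero furnishes an upper bound. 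Writing $\bx=(x_1,\bx')$ with $x_1\in(0,\frac\pi a)$ and $\bx'\in\omega$, I would take the separated quasimode $q(\bx)=\cos(a x_1)$, which has mean value zero on $\Omega_a$ by the choice of the length $\frac\pi a$ of the channel. The task then reduces to computing $\DNormc{q}{0,\Omega_a}$, which is elementary, and to estimating $\Normc{\nabla q}{-1,\Omega_a}$ from above.

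The essential step is the evaluation of the $H^{-1}$ norm of $\nabla q$. By the isometry of $-\Delta$ from $H^1_0$ to $H^{-1}$ used already in the introduction, $\Normc{\nabla q}{-1,\Omega_a}=\langle -\Delta^{-1}\nabla q,\nabla q\rangle_{\Omega_a}=\langle \SS q, q\rangle_{\Omega_a}$, and this quantity equals $\inf_{\bv}\DNormc{\div\bv-q}{0}$ type expressions; more directly, one solves $-\Delta \Phi=\nabla q$ with Dirichlet conditions and computes $\DNormc{\nabla\Phi}{0}$. The natural route is to construct the minimizing vector field $\bv$ explicitly using the auxiliary function $\psi_a$ solving \eqref{E:Dira}. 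Because $q=\cos(ax_1)$ separates and $-\partial_1^2 q = a^2 q$, the correcting potential can be taken in the form built from $\psi_a(\bx')\sin(ax_1)$ (or $\cos$), so that the Laplacian reproduces $\nabla q$ up to controllable terms. The appearance of $(-\Delta'+a^2)\psi_a=1$ is exactly what makes the transversal part close up, and the factor $\langle\psi_a,1\rangle_\omega$ in \eqref{E:channel} will emerge from integrating the square of the gradient of this potential over the cross section $\omega$.

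I expect the main obstacle to be the bookkeeping of the $H^{-1}$ estimate: one must choose the test vector field $\bv\in H^1_0(\Omega_a)^d$ realizing (or nearly realizing) the supremum in the inf-sup quotient, verify its boundary conditions at $x_1=0,\frac\pi a$ and on $\partial\omega$, and carry out the integration by parts cleanly so that the cross terms cancel and only $a^2\langle\psi_a,1\rangle_\omega/\mu(\omega)$ survives after dividing by $\DNormc{q}{0,\Omega_a}=\frac{\pi}{2a}\mu(\omega)$. A clean way to organize this is to compute $\langle \SS q,q\rangle$ directly: set $\bv=\nabla\Phi$ with $\Phi(\bx)=-\tfrac1a\psi_a(\bx')\sin(ax_1)$, check that $-\Delta\Phi$ has first-component leading term $q$, and bound the remainder. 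Once the algebra is arranged, the estimate \eqref{E:channel} follows by monotonicity of the Rayleigh quotient.

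\begin{proof}
By \eqref{E:LBB-Coss}, $\sigma(\Omega_a)$ is the minimum of the Rayleigh quotient $\langle \SS q,q\rangle_{\Omega_a}/\DNormc{q}{0,\Omega_a}$ over $q\in L^2_\circ(\Omega_a)$, hence it suffices to test with $q(\bx)=\cos(ax_1)$, which has zero mean on $\Omega_a$. A direct computation using $\psi_a$ from \eqref{E:Dira} produces a field whose divergence matches $q$ up to the correction encoded by $(-\Delta'+a^2)\psi_a=1$, and carrying out the resulting integrations yields \eqref{E:channel}.
\end{proof}
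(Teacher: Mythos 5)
Your overall strategy --- test the Rayleigh quotient of $\SS$ with the mean-zero quasimode $q=\cos(ax_1)$ and use $\psi_a$ to invert the Laplacian --- is exactly the paper's. But the one concrete formula you commit to is wrong, and it is precisely the step that carries the whole proof. You set $\bv=\nabla\Phi$ with $\Phi(x)=-\tfrac1a\psi_a(x')\sin(ax_1)$. First, $\Delta^{-1}\nabla q$ is \emph{not} a gradient field here: a direct check gives $\Delta\Phi=\tfrac1a\sin(ax_1)\bigl(a^2\psi_a-\Delta'\psi_a\bigr)=\tfrac1a\sin(ax_1)$ by \eqref{E:Dira}, hence $\Delta(\nabla\Phi)=\nabla(\Delta\Phi)=(\cos(ax_1),0,\dots,0)^\top$, which is not $\nabla q=(-a\sin(ax_1),0,\dots,0)^\top$. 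Second, $\nabla\Phi$ violates the Dirichlet condition: its first component $-\psi_a(x')\cos(ax_1)$ does not vanish at $x_1=0$ and $x_1=\tfrac{\pi}{a}$, and its transversal components $-\tfrac1a\sin(ax_1)\nabla'\psi_a(x')$ do not vanish on $\partial\omega$, since the normal derivative of $\psi_a$ is not zero there. So your $\bv$ neither solves the right equation nor belongs to $H^1_0(\Omega_a)^d$, and the ``direct computation'' invoked in your proof environment is never actually performed.

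The repair is to abandon the gradient ansatz and take the vector field with a single nonzero component, $\bu=(a\sin(ax_1)\,\psi_a(x'),0,\dots,0)^\top$. This does lie in $H^1_0(\Omega_a)^d$ (the sine factor kills it at $x_1=0,\tfrac{\pi}{a}$, and $\psi_a\in H^1_0(\omega)$ kills it on $\partial\omega$), and $\Delta\bu=(a\sin(ax_1)(\Delta'-a^2)\psi_a,0,\dots,0)^\top=\nabla q$ by \eqref{E:Dira}, so $\bu=\Delta^{-1}\nabla q$. Hence $\SS q=\div\bu=a^2\cos(ax_1)\psi_a(x')$, and the Rayleigh quotient $\langle\SS q,q\rangle_{\Omega_a}/\DNormc{q}{0,\Omega_a}$ collapses to $a^2\langle\psi_a,1\rangle_\omega/\mu(\omega)$ after the common factor $\int_0^{\pi/a}\cos^2(ax_1)\,dx_1$ cancels. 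This is exactly the paper's proof; your sketch identifies the right ingredients but the ansatz you wrote down would not produce \eqref{E:channel}.
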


\begin{proof}
We denote by $(x_1,x')\in(0,\frac{\pi}{a})\times\omega$ the coordinates in $\Omega_a$ and consider the quasimode $p(x)=\cos ax_1$. Then $\nabla p = (-a\sin ax_1, 0 \ldots 0)^\top$ and we check that
\[
   \Delta^{-1}\nabla p = (a\sin ax_1\,\psi_a(x'), 0 \ldots 0)^\top .
\]
Hence $\SS p = a^2\cos ax_1\,\psi_a(x')$. It is easy to see that the Rayleigh quotient satisfies
\[
   \frac{\langle \SS p, p\rangle}{\langle p, p\rangle} = 
   a^2 \frac{\langle\psi_a,1\rangle_\omega}{\mu(\omega)}\,,
\]
which ends the proof of \eqref{E:channel}.
\end{proof}

\begin{figure}
\begin{tabular}{ccc}
\includegraphics[scale=0.45]{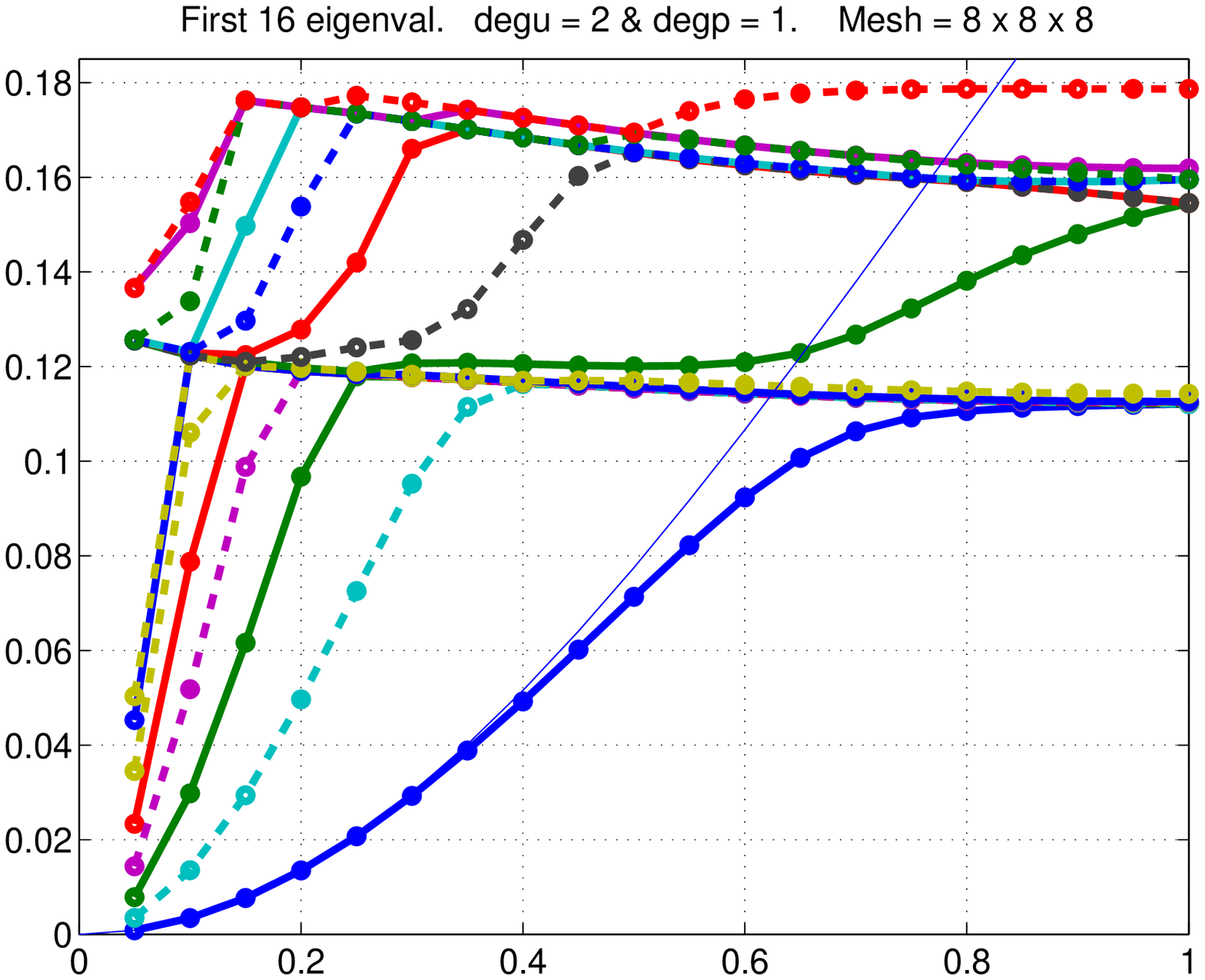} &
\includegraphics[scale=0.45]{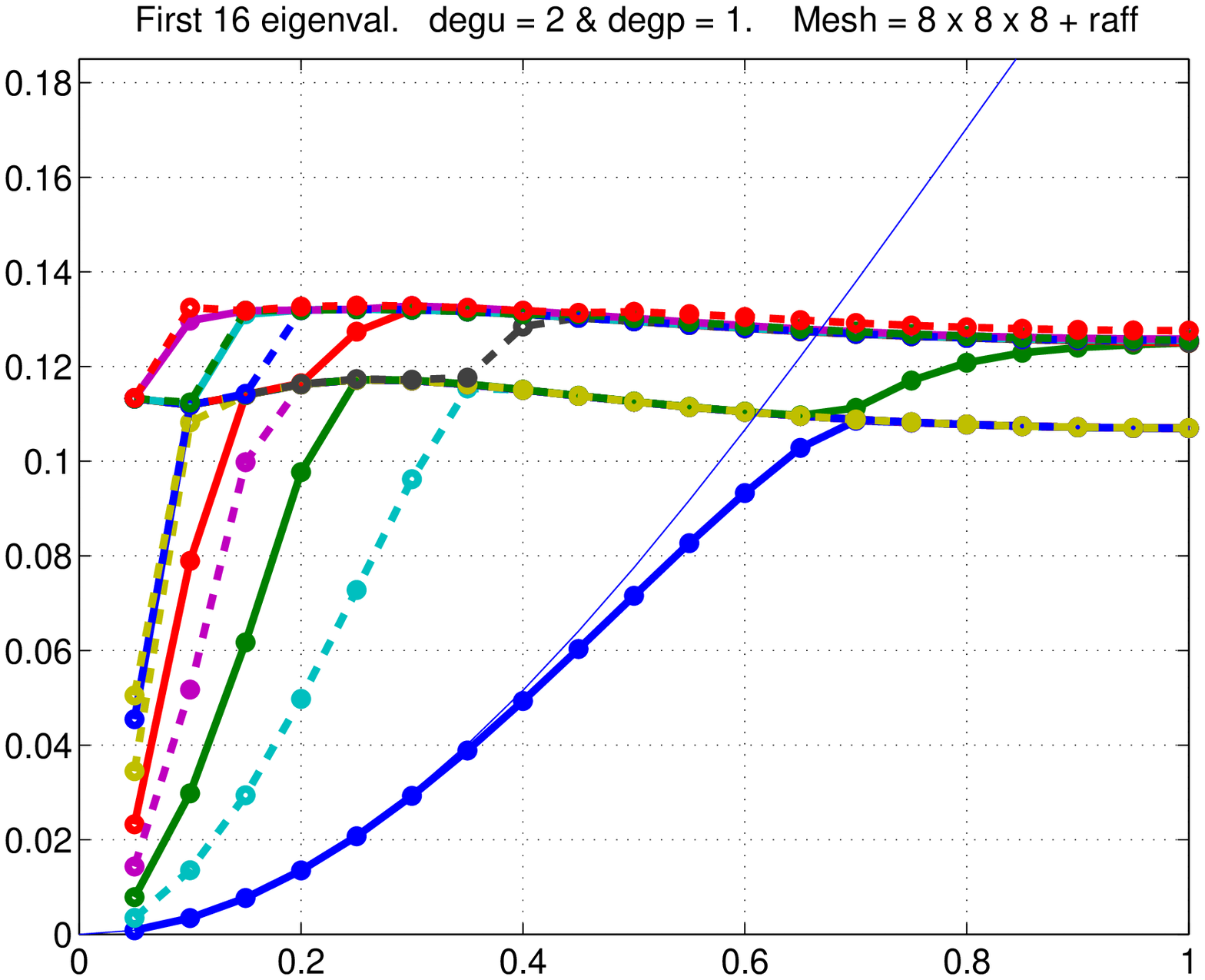} \\
(a) Uniform mesh, $\Q_2$ for $\bu$ and $\Q_1$ for $p$ & 
(b) Tensor refined mesh, $\Q_2$ for $\bu$ and $\Q_1$ for $p$  \\
\end{tabular}
\caption{First 16 computed Cosserat eigenvalues on cuboids $\frac1a\times$1$\times$1 vs parameter $a\in(0,1]$ (abscissa). The solid curve is the upper bound \eqref{E:cuboid}.}
\label{F5.3}
\end{figure}

To obtain an upper bound for the cuboids $\Omega_a$ of dimensions $\frac1a\times$1$\times$1, we take $\omega=(0,\pi)\times(0,\pi)$ in Lemma \ref{L:channel}. We can calculate $\psi_a$ by Fourier expansion in $\omega$: Starting from
\[
   1 = \frac{16}{\pi^2} \sum_{k_1,k_2>0,\ \text{odd}}
   \frac{1}{k_1}\,\frac{1}{k_2} \, \sin k_1x'_1\,\sin k_2x'_2
\]
we find:
\[
   \psi_a(x'_1,x'_2) = \frac{16}{\pi^2} \sum_{k_1,k_2>0,\ \text{odd}}
   \frac{1}{k_1}\,\frac{1}{k_2}\,\frac{1}{k^2_1+k^2_2+a^2} \, \sin k_1x'_1\,\sin k_2x'_2
\]
Hence
\[
   \langle\psi_a,1\rangle_\omega = \frac{64}{\pi^2} \sum_{k_1,k_2>0,\ \text{odd}}
   \frac{1}{k^2_1}\,\frac{1}{k^2_2}\,\frac{1}{k^2_1+k^2_2+a^2} 
\]
and \eqref{E:channel} yields for our cuboids
\begin{equation}
\label{E:cuboid}
   \sigma(\Omega_a) \le \left(\frac{8\,a}{\pi^2} \right)^2 \sum_{k_1,k_2>0,\ \text{odd}}
   \frac{1}{k^2_1}\,\frac{1}{k^2_2}\,\frac{1}{k^2_1+k^2_2+a^2} \ .
\end{equation}

\begin{remark}
\label{R:channel}
Translated into our notation, Dobrowolski's result \cite[\S3]{Dobrowolski2003} provides for $\Omega_a=(0,\frac{\pi}{a})\times\omega$
\begin{equation}
\label{E:channelD}
   \sigma(\Omega_a) \le \left(\frac{2\sqrt{3}\,a}{\pi} \right)^2\,
   \frac{\langle\psi_0,1\rangle_\omega}{\mu(\omega)}\,,
\end{equation}
hence for the cuboid $\Omega_a=(0,\frac{\pi}{a})\times(0,1)\times(0,1)$
\begin{equation}
\label{E:cuboidD}
   \sigma(\Omega_a) \le \left(\frac{16\sqrt{3}\,a}{\pi^3} \right)^2 \sum_{k_1,k_2>0,\ \text{odd}}
   \frac{1}{k^2_1}\,\frac{1}{k^2_2}\,\frac{1}{k^2_1+k^2_2} \ .
\end{equation}
\end{remark}

We observe in Figure \ref{F5.3} that our computations are in good agreement with the bound \eqref{E:cuboid}. 
Nevertheless, they are more difficult to interpret than in 2D. The possible presence of spurious eigenvalues is not easy to distinguish from the manifestation of the essential spectrum. Moreover, at this stage, it is an open question whether the bottom of the essential spectrum comes from edges or from corners. Numerical experiments on cylinders with circular or annular sections, which are compatible with the upper bound \eqref{E:channel}, tend to suggest that the essential spectrum coming from the edges is restricted to $[\frac12-\frac1\pi,\frac12+\frac1\pi]$. Therefore we may conjecture that we see in Figure~\ref{F5.3} a manifestation of the essential spectrum coming from the corners of the cuboids.

\def\cprime{$'$} \newcommand{\noopsort}[1]{}\def\cprime{$'$}

\bigskip\bigskip

\end{document}